\let\oldsection\section 
\renewcommand{\section}{
  \renewcommand{\theequation}{\thesection.\arabic{equation}}
  \oldsection}
\newcommand{\mexp}[1]{\ensuremath{\exp(-2\pi \mathrm{i}\,\sprod{ #1 })}}
\newcommand{\N}{\mathbb{N}}
\newcommand\Z{\mathbb{Z}}
\newcommand\C{\mathbb{C}}
\newcommand\Q{\mathbb{Q}}
\newcommand\R{\mathbb{R}}
\newcommand{\nops}[1]{\ensuremath{\vert #1  \vert}}
\newcommand{\norm}[1]{\left\lVert#1\right\rVert} 
\newcommand{\trace}{\ensuremath{\mathrm{Trace}}} 
\newcommand{\tbox}[1]{\ensuremath{\quad \mbox{#1} \quad}} 
\newcommand{\RootA}[1][n-1]{\ensuremath{\mathrm{A}_{#1}}}
\newcommand{\RootB}[1][n]{\ensuremath{\mathrm{B}_{#1}}}
\newcommand{\RootC}[1][n]{\ensuremath{\mathrm{C}_{#1}}}
\newcommand{\RootD}[1][n]{\ensuremath{\mathrm{D}_{#1}}}
\newcommand{\RootE}[1][n]{\ensuremath{\mathrm{E}_{#1}}}
\newcommand{\RootF}[1][n]{\ensuremath{\mathrm{F}_{#1}}}
\newcommand{\RootG}[1][n]{\ensuremath{\mathrm{G}_{#1}}}
\newcommand{\weyl}{\ensuremath{\mathcal{W}}} 
\newcommand{\PC}{\ensuremath{\Lambda\!\!\Lambda}}
\newcommand{\roots}{\ensuremath{\rho}}
\newcommand{\highestroot}{\roots_{0}}
\newcommand{\Roots}{\ensuremath{\mathrm{R}}}
\newcommand{\Base}{\ensuremath{\mathrm{B}}}
\newcommand{\Corootlattice}{\ensuremath{\Lambda}}
\newcommand{\sprod}[1]{\ensuremath{\langle #1 \rangle}} 
\newcommand{\fweight}[1]{\ensuremath{\omega_{#1}}}	
\newcommand{\weight}{\ensuremath{\mu}} 
\newcommand{\Weights}{\Omega} 
\newcommand{\Vor}{\ensuremath{\mathrm{Vor}}}
\def\row#1/#2!{#1_{\IfStrEq{#2}{}{n-1}{#2}} & \dynkin{#1}{#2}\\}
\newtheorem{lemma}{Lemma}[section]
\newtheorem{example}[lemma]{Example}
\newtheorem{proposition}[lemma]{Proposition}
\newtheorem{theorem}[lemma]{Theorem}
\newcommand{\italgf}{\slshape  }
\title{
	On symmetry adapted bases in trigonometric optimization
}
\author{	
	Tobias Metzlaff\thanks{
		Department of Mathematics, University of Kaiserslautern--Landau, 67663 Kaiserslautern, Germany \\
		$\phantom{1111}$\texttt{tobias.metzlaff@rptu.de}}
}
\renewcommand{\headsep}{8mm}
\renewcommand{\footskip}{15mm}
\begin{document}

\maketitle
\thispagestyle{empty}

\begin{abstract}
The problem of computing the global minimum of a trigonometric polynomial is computationally hard. 
We address this problem for the case, where the polynomial is invariant under the exponential action of a finite group. 
The strategy is to follow an established relaxation strategy in order to obtain a converging hierarchy of lower bounds. 
Those bounds are obtained by numerically solving semi-definite programs (SDPs) on the cone of positive semi-definite Hermitian Toeplitz matrices, 
which is outlined in the book of Dumitrescu \cite{dumitrescu07}. 
To exploit the invariance, we show that the group has an induced action on the Toeplitz matrices 
and prove that the feasible region of the SDP can be restricted to the invariant matrices, whilst retaining the same solution. 
Then we construct a symmetry adapted basis tailored to this group action, 
which allows us to block-diagonalize invariant matrices and thus reduce the computational complexity to solve the SDP. 

The approach is in its generality novel for trigonometric optimization and complements the one that was proposed as a poster at the ISSAC 2022 conference \cite{chromaticissac22} and later extended to \cite{chromatic22}. 
In the previous work, we first used the invariance of the trigonometric polynomial to obtain a classical polynomial optimization problem on the orbit space and subsequently relaxed the problem to an SDP. 
Now, we first make the relaxation and then exploit invariance. \\
~\\
Partial results of this article have been presented as a poster at the ISSAC 2023 conference \cite{issac23}. \\
~\\
\textbf{Keywords}: trigonometric optimization, sums of squares, lattices, finite groups, symmetry\\
~\\
\textbf{MSC}: 13A50  33B10  90C23
\clearpage
\end{abstract}

\tableofcontents

\clearpage

\section{Introduction}
\label{section_introduction}
\setcounter{equation}{0}

We present a sums-of-squares-based algorithm to minimize a multivariate trigonometric polynomial under symmetry assumptions, 
which reduces computational complexity whilst preserving numerical accuracy and convergence rate. 
The algorithm allows exploitation of symmetry with respect to any finite group that acts on a lattice. 

Trigonometric polynomials are good $L^2$-approximations for real-valued, periodic functions \cite{MuntheKaas2012}. 
Minimizing a trigonometric polynomial is a problem that arises, for example, in 
filter design \cite{dumitrescu07}, 
computation of spectral bounds \cite{BdCOV}, 
and optimal power flow \cite{bai08,josz18}. 

Compared to classical polynomials, trigonometric ones are associated to a lattice. 
This makes them relevant for a variety of problems in geometry and information theory, with incidence in physics and chemistry, 
where lattices often provide optimal configurations. 
For example, the hexagonal lattice is classically known to be optimal for 
sampling, packing, covering, and quantization in the plane \cite{conway1988a,kunsch05}, 
but also proved, or conjectured, to be optimal for energy minimization problems \cite{Petrache20,faulhuber23}. 
More recently, the $\RootE[8]$ lattice was proven to give an optimal solution for the sphere packing problem and 
a large class of energy minimization problems in dimension $8$ \cite{Viazovska17,Viazovska22}. 
From an approximation point of view, such lattices describe Gaussian cubature \cite{Xu09,Moody2011}, 
a rare occurence on multidimensional domains, 
and are relevant in graphics and computational geometry, 
see for instance \cite{Choudhary20} and references therein. 

The distinguishing feature of the above lattices is their intrinsic symmetry. 
The latter is given by the action of a finite group on the lattice, 
which induces a linear action on trigonometric polynomials. 
For the goal of trigonometric optimization, 
it is this feature that is emphasized and exploited in an optimization context. 

Computing the global minimum of a polynomial is algorithmically hard and 
can be achieved numerically with an algorithm based on sums-of-squares reinforcements \cite{dumitrescu07,josz18}. 
The convergence rate of this approach for trigonometric polynomials was recently shown to be exponential \cite{bach22}. 

The approach is summarized as follows. 
One restricts to suitable finite subsets 
$\Weights_d \subseteq \Weights_{d+1} \subseteq \ldots \subseteq \Weights$ of the lattice, to obtain a degree bound. 
Then a lower bound for the minimum of a trigonometric polynomial $f$ is obtained 
by computing the maximal $r \in \R$, such that $f - r$ is a Hermitian sum of squares, 
where the summands are bounded in the degree. 
This means, that $f - r$ can be represented as a positive semi-definite Hermitian Toeplitz matrix 
so that the problem of computing $r$ becomes a semi-definite program (SDP). 
By increasing the parameter $d$, that is, the degree bound, one improves the quality of the approximation, 
but the problem becomes more and more costly to solve. 

It is at this point, where we exploit symmetry. 
If the objective function is invariant under the action of a finite group, 
then the Hermitian matrices in the SDP can be block diagonalized according to a symmetry adapted basis. 
This reduces the number of variables of the SDP significantly 
and thus reduces required memory for storing and computational effort for solving the problem. 

The main tool in this article, the symmetry adapted basis, is computed by 
decomposing a representation that corresponds to the finite subset $\Weights_d$ into isotypic components \cite{serre77}. 
The fact that one can do so is widely used in several areas, such as 
combinatorics \cite{Stanley1979,Bergeron09}, 
optimization \cite{Gatermann04,vallentin09,riener2013}, 
approximation \cite{KdK23}, 
interpolation \cite{HubertBazan21,HubertBazan22}, 
dynamical systems \cite{gatermann2000,HubertLabahn2013}, 
polynomial systems \cite{Gatermann1990,FaugereRahmany2009}, 
computational invariant theory \cite{HubertLabahn2016,HubertBazan22b}, 
as well as their fields of application \cite{FasslerStiefel92}. 
It was exploited for the special case of the symmetric group in \cite{KdK23} and is here extended to finite groups acting on lattices. 

An alternative approach to exploiting symmetry in trigonometric optimization was explored by Hubert, Moustrou, Riener and the author in \cite{chromaticissac22,chromatic22} and the thesis \cite{TobiasThesis}. 
The idea there was to rewrite an invariant trigonometric polynomial to a classical one and then use relaxation tools from polynomial optimization \cite{lasserre01,parrilo03}. 
The present paper complements this work in the sense that we first rewrite the problem to a semi-definite program and then exploit symmetry. 
One goal is to provide a framework for a fair comparison between polynomial and Hermitian sums of squares. 
This completes a recently presented poster at the ISSAC 2023 conference \cite{issac23}. 

Next to to symmetry, one can also exploit sparsity in the optimization of complex polynomials at a cost of numerical accuracy and convergence, see \cite{victor22,lakshmi20,corbi23}. 
A further step towards improved computational efficiency would be to exploit both sparsity and symmetry. 


The article is structured as follows: 
In \Cref{section_trigonometric}, we define trigonometric polynomials and the associated concepts of periodicity and degree. 
We recall how to rewrite a trigonometric polynomial in terms of a Hermitian Toeplitz matrix. 
The first novelty is to construct a representation of a finite group that induces the action on trigonometric polynomials up to a fixed degree. 
Under the assumption that the trigonometric polynomial is invariant, 
we show that the associated Hermitian Toeplitz matrix is equivariant with respect to the constructed representation 
and can thus be block diagonalized via a symmetry adapted basis according to the isotypic decomposition of the representation. 

The Hermitian sums-of-squares reinforcement is recalled in \Cref{section_optimization}. 
The feasible region of the arising semidefinite program can be restricted to equivariant Hermitian Toeplitz matrices. 
Thus, it suffices to consider block diagonal matrices. 
We show how to apply symmetry reduction in this context with a clarifying example. 

Finally, in \Cref{section_basis}, we show how to compute the symmetry adapted basis for our setup 
and consider a larger example to discuss possible implementations in practice. 

There is a \textsc{Maple} worksheet
dedicated to the examples and computations in this article, 
which requires the \textsc{Maple} package 
\textsc{GeneralizedChebyshev}\footnote{
\textsc{Maple} software: \href{https://maplesoft.com}{https://maplesoft.com}\\
\phantom{111.}\textsc{GeneralizedChebyshev} package: 
\href{https://github.com/TobiasMetzlaff/GeneralizedChebyshev}{https://github.com/TobiasMetzlaff/GeneralizedChebyshev}\\ 
\phantom{111.}documentation of the package: 
\href{https://tobiasmetzlaff.com/html_guides/GeneralizedChebyshevHelp.html}{https://tobiasmetzlaff.com/html\_guides/GeneralizedChebyshevHelp.html}\\
\phantom{111.}worksheet for this article: 
\href{https://tobiasmetzlaff.com/html_guides/trigonometric_symmetry_reduction.html}{https://tobiasmetzlaff.com/html\_guides/trigonometric\_symmetry\_reduction.html}}. 
Beyond that, the package offers a large variety of functionalities such as an implementation of the irreducible root systems and 
computational aspects of multiplicative invariants. 

\section{Trigonometric polynomials with crystallographic symmetry}
\label{section_trigonometric}
\setcounter{equation}{0}

Let $V$ be a finite-dimensional real vector space with inner product $\sprod{\cdot,\cdot}$. 
Let $\Weights$ be a full-dimensional lattice in $V$ with dual lattice $\Corootlattice := \{ \lambda \in V \,\vert\, \forall\,\weight\in\Weights:\,\sprod{\weight,\lambda} \in \Z \}$. 
A function $f: V \to \R$ which is $\Corootlattice$-periodic and $L^2$-integrable on the periodicity domain has a Fourier expansion 
\begin{equation}\label{eq_trig_poly}
	f(u) = \sum\limits_{\weight\in \Weights} f_\weight \, \mathfrak{e}^\weight(u), 
\end{equation}
with $\mathfrak{e}^\weight(u) := \mexp{\weight,u}$ and coefficients $f_{\weight} = \overline{f_{-\weight}} \in \C$, see \cite{dym85,conway1988a}. 
If all but finitely many coefficients $f_\weight$ are zero, then we call $f$ a \textbf{trigonometric polynomial}. 
The set of all trigonometric polynomials is a vector space with basis $\{\mathfrak{e}^\weight\,\vert\,\weight\in\Weights\}$. 

\subsection{Periodicity domain}

The property ``$\Corootlattice$-periodic'' means that, for all $u\in V$ and $\lambda\in\Corootlattice$, 
we have $f(u+\lambda) = f(u)$. 
In particular, $\Corootlattice$ acts on $V$ as an additive group by translation and 
$f$ is constant on all residue classes $u + \Corootlattice$ of the compact torus $V / \Corootlattice$. 
To understand the periodicity domain of a trigonometric polynomial, we consider the \textbf{Vorono\"{i} cell} of $\Corootlattice$ 
\[
	\Vor(\Corootlattice)
:=	\{ u\in V\,\vert\, \forall\,\lambda\in\Corootlattice:\,\norm{u} \leq \norm{u-\lambda} \},
\]
where $\norm{u}:=\sqrt{\sprod{u,u}}$ is the induced norm. 
This is a compact, convex set and tiles the space by $\Corootlattice$-translation, that is,
\begin{equation}\label{eq_VoronoiTiles}
	V
=	\bigcup\limits_{\lambda\in\Corootlattice} (\Vor(\Corootlattice)+\lambda). 
\end{equation}
The interiors of the cells $\Vor(\Corootlattice)+\lambda$ are disjoint and the intersections of two adjacent cells is an entire face of both of them \cite[Ch. 2, \S 1.2]{conway1988a}. 
The origin $0 \in V$ is contained in the cell $\Vor(\Corootlattice) + 0 = \Vor(\Corootlattice)$ itself. 

We conclude that the Vorono\"i cell $\Vor ( \Corootlattice )$ is the closure of the periodicity domain. 

\subsection{Degree concept}

We now introduce the notion of degree for trigonometric polynomials. 
For $d\in\N$, 
\begin{equation}\label{eq_FiniteWeightSet}
	\Weights_d
:=	\Weights \cap d\,\Vor(\Corootlattice)
\end{equation}
is a finite subset of the lattice, because $\Weights$ is discrete and $\Vor(\Corootlattice)$ is compact. 
We say that a trigonometric polynomial $f\neq 0$ with coefficients $f_\weight$ has \textbf{degree} $\deg(f) = d \in \N$ if $\weight \in \Weights_d$ whenever $\weight \neq 0$ and $d$ is minimal with this property. 
The following statement shows that this is a suitable notion of degree. 

\begin{lemma}
We have 
\begin{enumerate}
	\item $\{0\} =\Weights_0 \subseteq \Weights_1 \subseteq \Weights_2 \subseteq \ldots \subseteq \Weights$, 
	\item $\Weights_d + \Weights_{d'} \subseteq \Weights_{d+d'}$ whenever $d,d'\in\N$, 
	\item $\Weights=\bigcup\limits_{d\in\N} \Weights_d$. 
\end{enumerate}
\end{lemma}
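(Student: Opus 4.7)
The plan is to deduce all three claims from two elementary properties of the Voronoi cell: it is convex and contains the origin in its interior. The lemma then follows by elementary manipulation of scaled convex sets intersected with the discrete lattice $\Weights$.

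First I would handle claim (1). The equality $\Weights_0 = \{0\}$ is immediate since $0 \cdot \Vor(\Corootlattice) = \{0\}$ and $0 \in \Weights$. For the ascending chain, it suffices to show that $d\,\Vor(\Corootlattice) \subseteq (d+1)\,\Vor(\Corootlattice)$, since intersecting both sides with $\Weights$ preserves the inclusion. Given $u \in d\,\Vor(\Corootlattice)$, write $u/d \in \Vor(\Corootlattice)$ and observe that
\[
	\frac{u}{d+1} = \frac{d}{d+1}\cdot\frac{u}{d} + \frac{1}{d+1}\cdot 0
\]
is a convex combination of two points of $\Vor(\Corootlattice)$. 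Since $\Vor(\Corootlattice)$ is convex and contains $0$, this combination lies in $\Vor(\Corootlattice)$, so $u \in (d+1)\,\Vor(\Corootlattice)$. For $d = 0$ the inclusion is immediate from $0 \in \Vor(\Corootlattice)$.

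For claim (2), pick $\weight \in \Weights_d$ and $\weight' \in \Weights_{d'}$. If $d, d' > 0$, then
\[
	\frac{\weight + \weight'}{d + d'} = \frac{d}{d+d'}\cdot\frac{\weight}{d} + \frac{d'}{d+d'}\cdot\frac{\weight'}{d'}
\]
is again a convex combination of points in $\Vor(\Corootlattice)$, hence lies in $\Vor(\Corootlattice)$, so $\weight+\weight' \in (d+d')\,\Vor(\Corootlattice)$. Since $\weight+\weight' \in \Weights$ (lattice closure under addition), this proves $\weight+\weight' \in \Weights_{d+d'}$. The degenerate cases $d = 0$ or $d' = 0$ reduce to claim (1).

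For claim (3), the key ingredient is that $\Vor(\Corootlattice)$ contains a neighborhood of the origin. This follows from the tiling \eqref{eq_VoronoiTiles}, or directly from the definition: if $r > 0$ is the minimal distance from $0$ to a nonzero element of $\Corootlattice$, then the open ball of radius $r/2$ around $0$ lies in $\Vor(\Corootlattice)$. Consequently, for any $\weight \in \Weights$ there exists $d \in \N$ large enough that $\weight / d$ lies in this ball, hence $\weight \in d\,\Vor(\Corootlattice) \cap \Weights = \Weights_d$.

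None of these steps is genuinely difficult; the only point requiring a moment of care is claim (3), where one must invoke that $0$ is an interior point of $\Vor(\Corootlattice)$ rather than merely a boundary point. Once this is noted, the scaling argument is immediate.
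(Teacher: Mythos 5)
Your proof is correct and follows essentially the same route as the paper: convexity of the Vorono\"{\i} cell for claims (1) and (2), additive closure of the lattice, and (for claim (3)) the observation that the cell contains a neighborhood of the origin. The paper merely declares (1) and (3) ``clear'' and writes (2) as a one-line chain of set inclusions via $d\,\Vor(\Corootlattice) + d'\,\Vor(\Corootlattice) = (d+d')\,\Vor(\Corootlattice)$; your version spells out the same convex-combination argument in more detail.
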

\begin{proof}
The first and third statement are clear. 
Furthermore, 
\[
	\Weights_d + \Weights_{d'}
=	\left(\Weights \cap d\,\Vor(\Corootlattice)\right) + \left( \Weights \cap d'\,\Vor(\Corootlattice) \right)
\subseteq
	\Weights \cap \left(d\,\Vor(\Corootlattice) + d'\,\Vor(\Corootlattice) \right)
=	\Weights \cap (d + d')\, \Vor(\Corootlattice)
=	\Weights_{d+d'},
\]
because $\Weights$ is additively closed and $\Vor(\Corootlattice)$ is convex. 
\end{proof}

\begin{figure}[H]
\begin{center}
\includegraphics[width=0.3\textwidth]{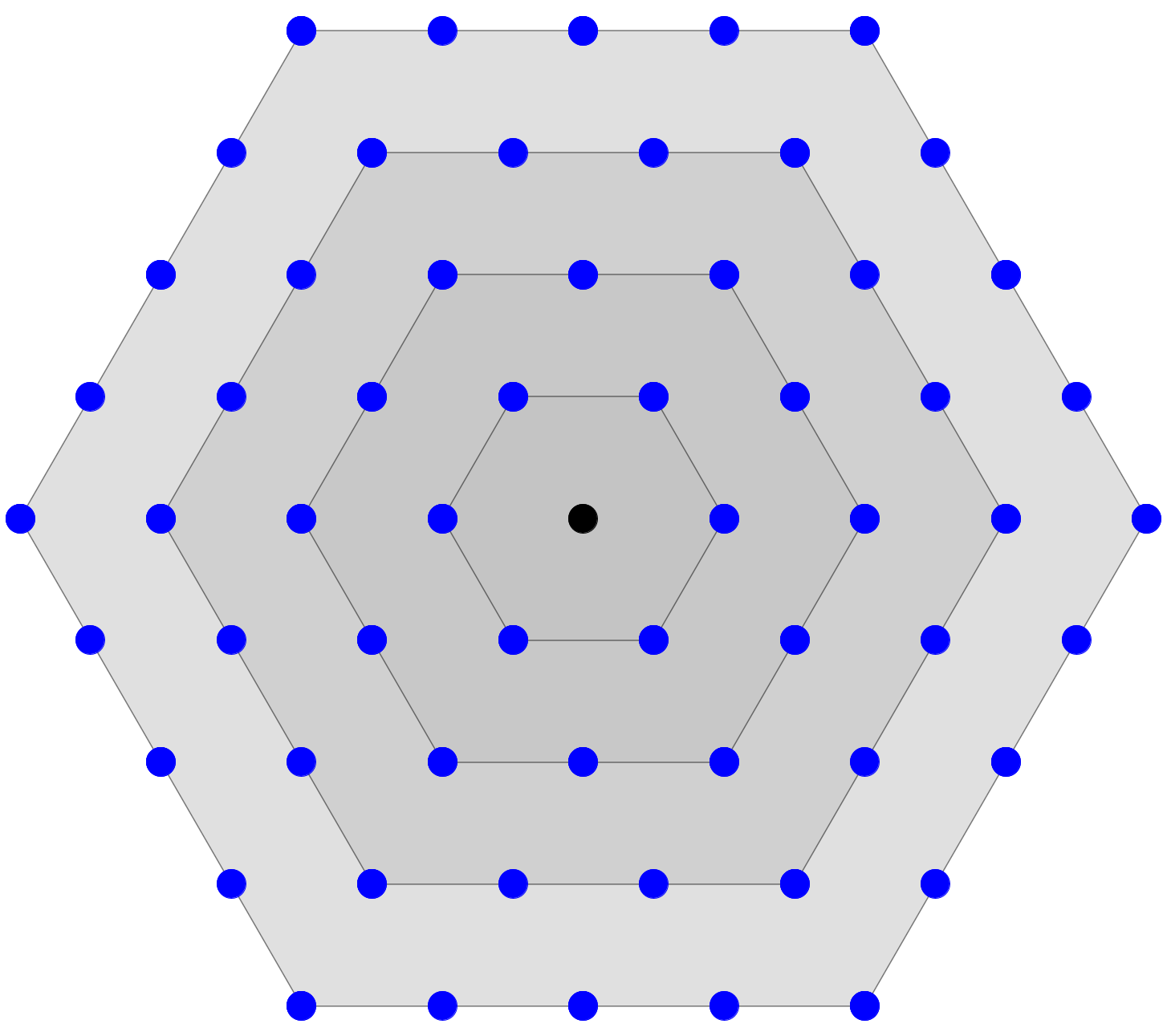}
\caption{The subsets $\{0\} = \Weights_0 \subseteq \Weights_1 \subseteq \ldots \subseteq \Weights_4$ of the hexagonal lattice in the plane 
(specifically the weight lattice of the root system $\RootA[2]$) are contained in scaled copies of the Vorono\"i cell of the dual lattice.}
\label{fig_hexagonal_lattice}
\end{center}
\end{figure}

\begin{example}[Using the notation from \Cref{appendix_root_systems}]~
\begin{enumerate}
\item It may happen that $\Weights_d + \Weights_{d'}$ is a proper subset of $\Weights_{d+d'}$: 
Consider the (self-dual) lattice $\Weights = \Corootlattice = \Z\fweight{1}\oplus\ldots\oplus\Z\fweight{8}$ of the root system $\RootE[8]$ from \emph{\cite[Pl. VII]{bourbaki456}}. 
None of the lattice generators $\fweight{i}$ lies in the Vorono\"{i} cell and thus $\Weights_1 = \Weights_0 = \{0\}$. 
On the other hand, we have $\Weights_2 = \weyl \{0, \fweight{1}, \fweight{8}\}$, where $\weyl$ is the Weyl group. 
Hence, $\Weights_1 + \Weights_1 = \{0\} \subsetneq \Weights_2$. 
\item This definition of degree yields a filtration of the trigonometric polynomials in the sense that the product of two degree $d$ polynomials is of degree at most $2d$. 
However, it is not a grading, as the degree might be strictly smaller than $2d$. 
A counter example is given by the hexagonal lattice in \emph{\Cref{fig_hexagonal_lattice}}. 
\end{enumerate}
\end{example}

\subsection{Toeplitz matrix representations}

Given a trigonometric polynomial $f = \sum_\weight f_\weight\,\mathfrak{e}^\weight$, 
let $d\in\N$ be the minimal integer, such that $\weight\in\Weights_d + \Weights_d$ whenever $f_\weight \neq 0$. 
We shall write $f$ as 
\begin{equation}\label{eq_MatrixRepresentation}
	f(u)
=	\mathbf{E}_d(u)^{\dagger} \,\mathbf{mat}(f)\,\mathbf{E}_d(u),
\end{equation}
where $\dagger$ denotes the complex conjugate transpose and
\begin{itemize}
\item $\mathbf{E}_d(u)$ is the vector of all $\mathfrak{e}^\weight(u)/\sqrt{\nops{\Weights_d}}$ with $\weight\in\Weights_d$, 
\item $\mathbf{mat}(f)$ is a matrix with rows and columns indexed by $\Weights_d$ and independent of $u \in V$. 
\end{itemize}
By comparing coefficients in \Cref{eq_MatrixRepresentation}, we obtain the equation 
\begin{equation}\label{eq_CoeffEntryRelation}
	f_{\eta}
=	\frac{1}{\nops{\Weights_d}}
	\sum\limits_{\substack{\weight,\nu\in\Weights_d \\ \weight - \nu = \eta}} \mathbf{mat}(f)_{\weight \nu} 
\end{equation}
for $\eta \in \Weights$. 
Therefore, we may assume without loss of generality that the entries $\mathbf{mat}(f)_{\weight \nu}$ depend only on the the lattice element $\eta = \weight - \nu$. 
Hence, $\mathbf{mat}(f)$ can be chosen uniquely as a Toeplitz matrix. 
Then the conjugate sign symmetry $f_\weight = \overline{f_{-\weight}}$ implies that $\mathbf{mat}(f)$ is Hermitian, 
that is, $\mathbf{mat}(f)^\dagger = \mathbf{mat}(f)$. 
The space of all Hermitian Toeplitz matrices with rows and columns indexed by $\Weights_d$ is denoted by $\mathrm{Toep}_d$. 

\subsection{Group symmetry}

Let $\weyl\subseteq \mathrm{GL}(V)$ be a finite group. 
We shall define a group action of $\weyl$ on the space of Hermitian Toeplitz matrices. 
In order to do so, we assume that $\Weights$ is a \textbf{$\weyl$-lattice}, that is, $\weyl(\Weights) = \Weights$. 
Then a group element $s\in\weyl$ acts on a trigonometric polynomial $f$ with coefficients $f_\weight$ by 
\begin{equation}
	s \cdot f (u) 
:=	\sum_{\weight \in \Weights} f_\weight\,\mathfrak{e}^{s(\weight)} (u) 
 =	\sum_{\weight \in \Weights} f_{s^{-1}(\weight)}\,\mathfrak{e}^\weight (u)
 =	\sum_{\weight \in \Weights} f_\weight\,\mathfrak{e}^{\weight} ((s^t)^{-1}(u)) ,
\end{equation}
where $s^t$ denotes the transpose of $s$ with respect to $\sprod{\cdot,\cdot}$. 
We call $f$ \textbf{$\weyl$-invariant}, if $s \cdot f = f$ for all $s\in\weyl$. 
To proceed, we need to assume that $\weyl(\Weights_d) = \Weights_d$. 
Then a group element $s\in\weyl$ also acts on a matrix $\mathbf{X} \in \mathrm{Toep}_d$ with entries $\mathbf{X}_{\weight \nu}$ by
\begin{equation}\label{eq_LinearActionMatrix}
	(s \star \mathbf{X})_{\weight \nu} := \mathbf{X}_{s^{-1}(\weight)\,s^{-1}(\nu)} .
\end{equation}
We denote the fixed-point space of this action by 
$\mathrm{Toep}_d^\weyl := \{ \mathbf{X} \in \mathrm{Toep}_d \,\vert\, \forall\,s\in\weyl:\,s \star \mathbf{X} = \mathbf{X} \}$. 

\begin{lemma}
We have $\mathbf{mat}(f)\in\mathrm{Toep}_d^\weyl$ if and only if $f$ is $\weyl$-invariant. 
\end{lemma}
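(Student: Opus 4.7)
The plan is to establish the intertwining identity $\mathbf{mat}(s \cdot f) = s \star \mathbf{mat}(f)$ for every $s \in \weyl$. From this the equivalence is immediate: $f$ is $\weyl$-invariant iff $s\cdot f = f$ for every $s \in \weyl$, iff $\mathbf{mat}(s\cdot f) = \mathbf{mat}(f)$ for every $s\in\weyl$, iff $s \star \mathbf{mat}(f) = \mathbf{mat}(f)$ for every $s\in\weyl$, iff $\mathbf{mat}(f) \in \mathrm{Toep}_d^\weyl$.

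For the intertwining identity, my first step is to track how the column vector $\mathbf{E}_d(u)$ transforms under $s$. Using the adjointness $\sprod{\weight, s^t(u)} = \sprod{s(\weight), u}$ together with the standing hypothesis $\weyl(\Weights_d) = \Weights_d$, one checks that the coordinates of $\mathbf{E}_d$ are simply permuted: with $P_s$ the permutation matrix defined by $(P_s)_{\weight\nu} = \delta_{s(\weight), \nu}$ one has $\mathbf{E}_d(s^t(u)) = P_s\, \mathbf{E}_d(u)$. Combining this with the reindexed form $(s\cdot f)(u) = f(s^t(u))$ of the defining sum and with \Cref{eq_MatrixRepresentation} gives
\[
(s\cdot f)(u) = \mathbf{E}_d(u)^\dagger \bigl(P_s^\dagger\, \mathbf{mat}(f)\, P_s\bigr) \mathbf{E}_d(u),
\]
and a short index calculation identifies $P_s^\dagger\, \mathbf{mat}(f)\, P_s$ with the action $s\star \mathbf{mat}(f)$ defined in \Cref{eq_LinearActionMatrix}.

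To upgrade this equality of quadratic forms to an equality of matrices, I invoke the uniqueness of the Hermitian Toeplitz representation that is encoded in the normal form surrounding \Cref{eq_CoeffEntryRelation}. This requires verifying that $s \star \mathbf{mat}(f)$ is again a Hermitian Toeplitz matrix. Hermiticity is preserved since conjugation by the real permutation matrix $P_s$ commutes with $\dagger$; the Toeplitz property follows from the observation that $\mathbf{mat}(f)_{s^{-1}(\weight), s^{-1}(\nu)}$ depends on $s^{-1}(\weight) - s^{-1}(\nu) = s^{-1}(\weight - \nu)$, hence on $\weight - \nu$ alone. Uniqueness then yields $\mathbf{mat}(s\cdot f) = s\star \mathbf{mat}(f)$.

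The main obstacle is careful bookkeeping with transposes and inverses, so that the permutation action on $\mathbf{E}_d$ induces on $\mathbf{mat}(f)$ precisely the indexing convention of \Cref{eq_LinearActionMatrix}, rather than its inverse or transpose. Once this is sorted out and the preservation of the Toeplitz structure under $\star$ is confirmed, the uniqueness argument delivers the lemma with no further work.
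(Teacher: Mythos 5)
Your proof is correct. The underlying idea is the same as the paper's---establish the intertwining relation $\mathbf{mat}(s\cdot f)=s\star\mathbf{mat}(f)$ and then invoke uniqueness of the Toeplitz representation---but you reach it by a slightly different route: you track how the Gram vector $\mathbf{E}_d$ transforms under $s^t$ via a permutation matrix $P_s$ and conjugate the quadratic form, whereas the paper substitutes directly into the coefficient formula of \Cref{eq_CoeffEntryRelation} and reindexes the sum over $\weight-\nu=\eta$ by $\weight\mapsto s(\weight)$, $\nu\mapsto s(\nu)$. Both are short; your version has the advantage of making explicit the permutation-matrix identity $P_s^\dagger\,\mathbf{X}\,P_s=s\star\mathbf{X}$, which in the paper only appears afterwards as \Cref{prop_SemiSimpleRep} with $\vartheta(s)=P_s^\dagger$, and of explicitly checking that $s\star\mathbf{mat}(f)$ is again Hermitian Toeplitz (a fact the paper uses tacitly). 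One small notational point: the paper's definition of the action writes $s\cdot f(u)=\sum_\weight f_\weight\,\mathfrak{e}^\weight((s^t)^{-1}(u))$, which is a misprint for $\mathfrak{e}^\weight(s^t(u))$; your reindexed form $(s\cdot f)(u)=f(s^t(u))$ is the correct one and matches the first two expressions in the paper's chain of equalities.
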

\begin{proof}
Let $s\in\weyl$ and $\eta \in\Weights$. 
Then
\[
	f_{s^{-1}(\eta)}
=	\frac{1}{\nops{\Weights_d}}
	\sum\limits_{\substack{\weight,\nu\in\Weights_d \\ \weight - \nu = s^{-1}(\eta)}} \mathbf{mat}(f)_{\weight \nu} 
=	\frac{1}{\nops{\Weights_d}}
	\sum\limits_{\substack{\weight,\nu\in\Weights_d \\ \weight - \nu = \eta}} \mathbf{mat}(f)_{s^{-1}(\weight) \, s^{-1}(\nu)} 
=	\frac{1}{\nops{\Weights_d}}
	\sum\limits_{\substack{\weight,\nu\in\Weights_d \\ \weight - \nu = \eta}} (s \star \mathbf{mat}(f))_{\weight \, \nu} .
\]
Therefore, we have $s \star \mathbf{mat}(f) = \mathbf{mat}(f)$ if and only if $s \cdot f = f$. 
\end{proof}

\subsection{Symmetry adapted bases}

We show that the matrices $\mathrm{Toep}_d^\weyl$ are block diagonal for a certain choice of basis. 
In order to do so, we start with a representation $\vartheta$ of $\weyl$, which induces the action on $\mathrm{Toep}_d$ by \Cref{eq_LinearActionMatrix}. 

Note that a trigonometric polynomial of degree at most $d$ can be written uniquely in terms of the $\mathfrak{e}^\weight$ with $\weight\in\Weights_d$. 
The coordinates in this basis yield a vector $\mathbf{x} \in \C^{\Weights_d}$ with $\mathbf{x}_{-\weight} = \overline{\mathbf{x}_{\weight}}$, 
where $\C^{\Weights_d}$ denotes the space of complex-valued vectors indexed by $\Weights_d$. 
For $s\in\weyl$, let $\vartheta(s)$ be the permutation matrix that takes $\mathbf{x} \in \C^{\Weights_d}$ to the vector with entries
\begin{equation}
	(\vartheta(s)\,\mathbf{x})_\weight := \mathbf{x}_{s^{-1}(\weight)}. 
\end{equation}
Then we obtain the following version of Maschke's theorem. 

\begin{proposition}\label{prop_SemiSimpleRep}
The space $\C^{\Weights_d}$ is a semi-simple $\weyl$-module with representation $\vartheta$. 
For all $\mathbf{X}\in\mathrm{Toep}_d$, we have
\[
	s \star \mathbf{X}
=	\vartheta(s)\,\mathbf{X}\,\vartheta(s^{-1}). 
\]
\end{proposition}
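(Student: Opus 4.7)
The proof reduces to two direct checks: that $\vartheta$ is a genuine representation of $\weyl$ on $\C^{\Weights_d}$, and that conjugation by $\vartheta(s)$ implements the action $\star$ on $\mathrm{Toep}_d$. My plan is to verify each in turn.

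For the first claim, I would start by noting that because $\weyl(\Weights_d)=\Weights_d$, every $s\in\weyl$ restricts to a permutation of $\Weights_d$, so $\vartheta(s)$ is a well-defined permutation matrix on $\C^{\Weights_d}$. The homomorphism property $\vartheta(st)=\vartheta(s)\,\vartheta(t)$ is a one-line check from the defining formula $(\vartheta(s)\mathbf{x})_\weight=\mathbf{x}_{s^{-1}(\weight)}$: the inverse is precisely what makes $\vartheta$ a left action rather than a right action. Semi-simplicity of the module $\C^{\Weights_d}$ then follows from Maschke's theorem, since $\weyl$ is finite and the base field is $\C$. If one prefers an explicit argument, it suffices to observe that the standard Hermitian inner product on $\C^{\Weights_d}$ (in which the basis indexed by $\Weights_d$ is orthonormal) is $\vartheta$-invariant because each $\vartheta(s)$ is a permutation matrix, hence unitary; this produces the orthogonal complement that splits any $\weyl$-submodule.

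For the conjugation identity, I would read off the matrix entries of the permutation matrices as $\vartheta(s)_{\weight\alpha}=\delta_{\alpha,s^{-1}(\weight)}$ and $\vartheta(s^{-1})_{\beta\nu}=\delta_{\beta,s^{-1}(\nu)}$. Substituting into the matrix product and collapsing the two Kronecker deltas yields
\[
(\vartheta(s)\,\mathbf{X}\,\vartheta(s^{-1}))_{\weight\nu}
=\sum_{\alpha,\beta}\vartheta(s)_{\weight\alpha}\,\mathbf{X}_{\alpha\beta}\,\vartheta(s^{-1})_{\beta\nu}
=\mathbf{X}_{s^{-1}(\weight),\,s^{-1}(\nu)},
\]
which is exactly $(s\star\mathbf{X})_{\weight\nu}$ by \Cref{eq_LinearActionMatrix}.

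There is no real obstacle here; the statement is essentially unpacking definitions. The only point requiring care is bookkeeping of inverses: the convention $(\vartheta(s)\mathbf{x})_\weight=\mathbf{x}_{s^{-1}(\weight)}$ must be used consistently in the entries of both $\vartheta(s)$ and $\vartheta(s^{-1})$ so that conjugation produces $s^{-1}$ on \emph{both} indices of $\mathbf{X}$, matching the action $\star$ as a left action.
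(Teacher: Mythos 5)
Your proof is correct and follows essentially the same route as the paper: both verify that $\vartheta$ is a permutation representation (hence unitary, hence the module is semi-simple by Maschke/Serre), and both establish the conjugation identity by a direct entry-wise computation with the Kronecker deltas of permutation matrices. The only cosmetic difference is that the paper rewrites $\vartheta(s^{-1})$ as $\vartheta(s)^t$ before expanding the double sum, whereas you read off the entries of $\vartheta(s^{-1})$ directly — the bookkeeping of inverses you flagged is handled identically.
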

\begin{proof}
The $\vartheta(s)$ are permutation matrices and thus orthogonal. 
Furthermore, they satisfy $\vartheta(s\,t) = \vartheta(s)\,\vartheta(t)$. 
Hence, $\C^{\Weights_d}$ is a $\weyl$-module with representation $\vartheta$ over $\C$ and as such semi-simple, see \cite[Ch. 1]{serre77}. 
Now, let $s\in\weyl$ and $\mathbf{X}\in\mathrm{Toep}_d$. 
Then
\[
	(\vartheta(s)\,\mathbf{X}\,\vartheta(s^{-1}))_{\weight \nu} 
=	(\vartheta(s)\,\mathbf{X}\,\vartheta(s)^t)_{\weight \nu} 
=	\sum_{\kappa , \eta \in \Weights_d} \vartheta(s)_{\weight \kappa} \, \mathbf{X}_{\kappa \eta} \, \vartheta(s)_{\nu \eta}
=	\mathbf{X}_{s^{-1}\weight \, s^{-1}\nu}
=	(s\star \mathbf{X})_{\weight \nu}  ,
\]
because $\vartheta(s)_{\weight \kappa} = 1$ when $\kappa = s^{-1} \weight$ and $0$ otherwise. 
\end{proof}

In particular, if $\mathbf{X}\in\mathrm{Toep}_d^\weyl$, then $\vartheta(s)\,\mathbf{X} = \mathbf{X}\,\vartheta(s)$. 
Hence, $\mathbf{X}$ commutes with the representation $\vartheta$ and is therefore equivariant. 
Now, since the $\weyl$-module $\C^{\Weights_d}$ is semi-simple, it decomposes into simple submodules. 
By Schur's lemma, any $\weyl$-homomorphism between complex simple modules is either a scalar isomorphism or zero. 
Hence, $\C^{\Weights_d}$ has an \textbf{isotypic decomposition} 
\begin{equation}\label{eq_IsotypicDecomposition}
	\C^{\Weights_d}
	=	\bigoplus\limits_{i=1}^h \left( \bigoplus\limits_{j=1}^{m_i} V_{i\,j} \right),
\end{equation}
where, for all $1 \leq i \leq h$, the $V_{i\,1},\ldots,V_{i\,m_i}$ are isomorphic, simple $\weyl$-submodules with dimension $d_i := \dim(V_{i\,j})$ and multiplicity $m_i\in\N$ so that $\sum_i d_i\,m_i = \nops{\Weights_d}$, see \cite[Ch. 2]{serre77}. 

According to this decomposition, there is a change of basis matrix $\mathbf{T} \in \mathrm{GL}(\C^{\Weights_d})$ 
that transforms any $\mathbf{X} \in \mathrm{Toep}_d^\weyl$ into
\begin{equation}\label{eq_blockstructure}
	\mathbf{T}^{-1}\,\mathbf{X}\,\mathbf{T}
=	\begin{pmatrix}
	\boxed{\mathbf{X}_1}			&			&									\\
									&	\ddots	&									\\
									&			&	\boxed{\mathbf{X}_h}
	\end{pmatrix} 
	\hspace{.3cm} \mbox{with} \hspace{.3cm}
	\mathbf{X}_i
=	\begin{pmatrix}
	\boxed{\tilde{\mathbf{X}}_{i}}	&			&									\\
									&	\ddots	&									\\
									&			&	\boxed{\tilde{\mathbf{X}}_{i}}
\end{pmatrix}
\in	\C^{d_i\,m_i \times d_i\,m_i} , 
\end{equation}
where each $\mathbf{X}_i$ consists of $d_i$ equal blocks $\tilde{\mathbf{X}}_{i}$ of size $m_i \times m_i$. 
By applying Gram-Schmidt, we may assume additionally that $\mathbf{T}$ is unitary, that is, $\mathbf{T}^{-1} = \mathbf{T}^\dagger$. 
The column vectors of $\mathbf{T}$ form a \textbf{symmetry adapted basis} of $\C^{\Weights_d}$. 
In particular, we obtain a basis for the trigonometric polynomials of degree at most $d$ according to the decomposition. 

\section{Optimization}
\label{section_optimization}
\setcounter{equation}{0}

We present a numerical algorithm to minimize a $\weyl$-invariant trigonometric polynomial $f$ with coefficients 
$f_{\weight} = \overline{f_{-\weight}} = f_{s^{-1}(\weight)}$ for $s\in\weyl$ a finite group and $\weight \in\Weights$ a full-dimensional $\weyl$-lattice in a finite-dimensional real vector space $V$ as in the previous \Cref{section_trigonometric}. 
The global minimum 
\[
	f^*
:=	\min\limits_{u\in V} f(u)
\]
exists and is assumed in some minimizer in the compact periodicity domain $\Vor(\Corootlattice)$, where $\Corootlattice$ is the dual lattice of $\Weights$. 

Our strategy is to follow a known relaxation technique of this optimization problem to a hierarchy of semi-definite programs (SDP), whose solutions will provide lower bounds of increasing quality for the minimum. 
This technique is explained in the book of Dumitrescu for the case $\Weights=\Z^n$ \cite[Ch. 2, 3, 4]{dumitrescu07}, which is adapted to any lattice $\Weights$ with the notion of degree ($=$ order of the hierarchy) as it was defined in the previous section in a straightforward manner. 
We then exploit the symmetry of $f$ by writing the SDP matrices in block diagonal form according to the isotypic decomposition in \Cref{eq_blockstructure}. 

The fundamental difference to \cite{chromatic22} is that we first relax to an SDP and then exploit symmetry. 

\subsection{Sums of squares}

To begin, let $f$ be a trigonometric polynomial and let $d\in\N$ be the minimal integer 
so that $\weight\in\Weights_d + \Weights_d$ whenever $f_\weight\neq 0$. 
The number $d$ can be seen as the starting index of a hierarchy of lower bounds, which is non-decreasing and converges to $f^*$. 
We shall see that exploiting symmetry in this setup does not effect the quality of the bound, but reduces the computational effort. 

Recall from \Cref{eq_MatrixRepresentation} that there is a unique Hermitian Toeplitz matrix $\mathbf{mat}(f)\in\mathrm{Toep}_d$, such that
\[
f(u)
=	\mathbf{E}_d(u)^\dagger \, \mathbf{mat}(f) \, \mathbf{E}_d(u).
\]
On the space of Hermitian Toeplitz matrices, the trace admits an inner product, so that we have an SDP
\begin{equation}\label{eq_MinRelaxation}
		f^*
\geq	f_d
:=		\min\limits_{\mathbf{X} \in \mathrm{Toep}_d} \, \trace(\mathbf{mat}(f) \, \mathbf{X}) \quad \mathrm{s.t.} \quad \mathbf{X} \succeq 0 ,\, \trace(\mathbf{X}) = 1 . 
\end{equation}
Here, $\mathbf{X} \succeq 0$ means that $\mathbf{X}$ is Hermitian positive semi-definite. 
This defines an ascending sequence of lower bounds $f_d \leq f_{d+1} \leq \ldots \leq f^*$ with asymptotic convergence, that is, $f^* = f_d$ for $d\to\infty$. 
The convergence rate is exponential \cite{bach22}. 

\begin{example}\label{ex_SOScertificate}
Consider the case $n=1$ with $\Weights = \Corootlattice = \Z$ and $\weyl = \{\pm 1\}$ (see also \emph{\Cref{example_2RootSys}}). 
Then $\Weights_1 = \{1,0,-1\}$ and $\mathbf{E}_1 = (\mathfrak{e}^1,1,\mathfrak{e}^{-1})^t/\sqrt{3}$.
We have
\[
	\mathbf{E}_1^\dagger\,\mathbf{E}_1
=	\frac{1}{3} \, \begin{pmatrix} \mathfrak{e}^{-1} & 1 & \mathfrak{e}^1 \end{pmatrix} \, \begin{pmatrix} \mathfrak{e}^1 \\ 1 \\ \mathfrak{e}^{-1} \end{pmatrix}
=	1
\quad \mbox{and} \quad	
	\mathbf{E}_1 \, \mathbf{E}_1^\dagger
=	\frac{1}{3}
\begin{pmatrix}
	1		&	\mathfrak{e}^1		&	\mathfrak{e}^{2}	\\
	\mathfrak{e}^{-1}	&	1		&	\mathfrak{e}^1		\\
	\mathfrak{e}^{-2}	&	\mathfrak{e}^{-1}	&	1
\end{pmatrix} .
\]
As a toy example, take 
$f(u) := \mathfrak{e}^2(u) - 2 \, \mathfrak{e}^1(u) + 3 - 2 \, \mathfrak{e}^{-1}(u) + \mathfrak{e}^{-2}(u) = 2\,\cos(4\pi u) - 4\,\cos(2\pi u) + 3$ 
with degree $2$ and global minimum $f^* = f(\lambda \pm 1/6) = 0$ for $\lambda \in \Z$. 
Note that $f$ can be rewritten as
\[
	f(u)
=	{\mathbf{E}_1(u)}^\dagger \, \mathbf{mat}(f) \, \mathbf{E}_1(u) ,
	\quad \mbox{where} \quad
	\mathbf{mat}(f)
=	\begin{pmatrix}
	3	&	-3	&	3	\\
	-3	&	3	&	-3	\\
	3	&	-3	&	3
\end{pmatrix} 
\in	\mathrm{Toep}_1 .
\]
Thus, the SDP from \emph{\Cref{eq_MinRelaxation}} is
\[
	f_1
=	\min\limits_{b,c\in\C} \, \trace\left(	
	\begin{pmatrix}
	3	&	-3	&	 3	\\
	-3	&	 3	&	-3	\\
	3	&	-3	&	 3
	\end{pmatrix} \,
	\begin{pmatrix}
	1/3				&	b				& c \\
	\overline{b}	&	1/3				& b \\
	\overline{c}	&	\overline{b}	& 1/3
	\end{pmatrix}
\right)	 
\quad \mathrm{s.t.} \quad 
	\begin{pmatrix}
	1/3						&	b				& c \\
	\overline{b}	&	1/3						& b \\
	\overline{c}	&	\overline{b}	& 1/3
	\end{pmatrix}
\succeq 0
\]
and the optimal value $f^* = f_1 = 0$ is obtained with $b=-c=1/6$.
\end{example}

\subsection{Symmetry reduction}

Now that we have formulated the hierarchy of SDPs, let us assume that $f$ is $\weyl$-invariant. 

\begin{proposition}\label{prop_SDPInvariance}
We have $f_d = f_{d,\weyl}$, where $f_d$ is as in \emph{\Cref{eq_MinRelaxation}} and
\[
	f_{d,\weyl} 
:=	\min\limits_{\mathbf{X} \in \mathrm{Toep}_d^\weyl} \, \trace(\mathbf{mat}(f) \, \mathbf{X}) 
	\quad \mathrm{s.t.} \quad \mathbf{X} \succeq 0 ,\, \trace(\mathbf{X}) = 1.
\]
\end{proposition}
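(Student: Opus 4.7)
The plan is the standard Reynolds-operator (averaging) argument. Since $\mathrm{Toep}_d^\weyl \subseteq \mathrm{Toep}_d$, minimizing over the smaller feasible set can only increase the optimum, so $f_d \leq f_{d,\weyl}$ is immediate. The work is in the reverse inequality.

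For the reverse inequality, I would start from an arbitrary feasible $\mathbf{X} \in \mathrm{Toep}_d$ of the SDP in \Cref{eq_MinRelaxation} and symmetrize it by setting
\[
	\mathbf{X}^\weyl
:=	\frac{1}{\nops{\weyl}}\sum_{s\in\weyl} s \star \mathbf{X}.
\]
I would then verify, step by step, that $\mathbf{X}^\weyl$ is feasible for the restricted program and lies in $\mathrm{Toep}_d^\weyl$. Invariance under the action is immediate from the averaging. The Toeplitz property is preserved by each summand, because $(s\star \mathbf{X})_{\weight \nu}$ depends only on $s^{-1}(\weight - \nu)$, which in turn depends only on $\weight - \nu$; Hermiticity is preserved because $(s\star \mathbf{X})^\dagger = s \star \mathbf{X}^\dagger$. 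Using \Cref{prop_SemiSimpleRep}, we have $s\star \mathbf{X} = \vartheta(s)\,\mathbf{X}\,\vartheta(s)^\dagger$ with $\vartheta(s)$ unitary (permutation), so each summand is positive semi-definite with the same trace as $\mathbf{X}$, and hence $\mathbf{X}^\weyl \succeq 0$ and $\trace(\mathbf{X}^\weyl) = 1$.

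The remaining point is that symmetrization does not change the objective value. Since $f$ is $\weyl$-invariant, the preceding lemma gives $\mathbf{mat}(f)\in\mathrm{Toep}_d^\weyl$, so $\mathbf{mat}(f) = \vartheta(s)\,\mathbf{mat}(f)\,\vartheta(s)^\dagger$ for all $s$. Substituting and applying cyclicity of the trace,
\[
	\trace\bigl(\mathbf{mat}(f)\,(s\star\mathbf{X})\bigr)
=	\trace\bigl(\vartheta(s)^\dagger\,\mathbf{mat}(f)\,\vartheta(s)\,\mathbf{X}\bigr)
=	\trace\bigl(\mathbf{mat}(f)\,\mathbf{X}\bigr),
\]
so averaging over $s$ gives $\trace(\mathbf{mat}(f)\,\mathbf{X}^\weyl) = \trace(\mathbf{mat}(f)\,\mathbf{X})$. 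Applying this to an (almost) optimal $\mathbf{X}$ of the original SDP produces a feasible point of the restricted SDP with the same objective value, yielding $f_{d,\weyl} \leq f_d$ and thus equality.

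The argument contains no real obstacle; the only place that could cause trouble is checking that $s \star \mathbf{X}$ stays inside $\mathrm{Toep}_d$, which needs the standing hypothesis $\weyl(\Weights_d) = \Weights_d$ (so that $s^{-1}$ permutes the index set of rows and columns), and the fact that the differences $s^{-1}(\weight) - s^{-1}(\nu)$ are governed only by $\weight - \nu$ because $s^{-1}$ is linear. Once those are noted, the rest is bookkeeping with the trace and with \Cref{prop_SemiSimpleRep}.
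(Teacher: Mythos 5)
Your proof is correct and follows essentially the same route as the paper's: both prove $f_d \leq f_{d,\weyl}$ by set inclusion of the feasible regions, and both establish the reverse inequality by applying the Reynolds (averaging) operator to a feasible point, checking feasibility of the average via \Cref{prop_SemiSimpleRep} and unitarity of $\vartheta(s)$, and using $\weyl$-invariance of $\mathbf{mat}(f)$ together with cyclicity of the trace to show the objective value is unchanged. The paper invokes convexity of the feasible region to pass from feasibility of each $s\star\mathbf{X}$ to feasibility of the average, whereas you check the conditions on $\mathbf{X}^\weyl$ directly, but this is an immaterial difference.
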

\begin{proof}
If $\mathbf{X}\in\mathrm{Toep}_d^\weyl$ is feasible for $f_{d,\weyl}$, then it is especially feasible for $f_d$. 
Hence, we have $f_d \leq f_{d,\weyl}$. 

For the converse, let $\mathbf{X}\in\mathrm{Toep}_d$ be feasible for $f_{d}$ and define
\[
	\tilde{\mathbf{X}}
:=	\frac{1}{\nops{\weyl}} \sum\limits_{s\in\weyl} s\star \mathbf{X}
\in	\mathrm{Toep}_d^\weyl.
\]
We claim that $\tilde{\mathbf{X}}$ is feasible for $f_{d,\weyl}$. 
Indeed, for $\mathbf{X} \succeq 0$, $s\in\weyl$ and $\mathbf{x} \in \C^{\Weights_d}$, we have 
\[
	{\mathbf{x}}^\dagger \, (s \star \mathbf{X}) \, \mathbf{x} 
=	{\mathbf{x}}^\dagger \, (\vartheta(s) \, \mathbf{X} \, \vartheta(s)^t) \, \mathbf{x} 
=	(\vartheta(s)^t\,\mathbf{x})^\dagger \, \mathbf{X} \, (\vartheta(s)^t \mathbf{x}) 
\geq 0 ,
\]
because $\vartheta(s)^t = \vartheta(s)^\dagger$. 
Thus, $s \star \mathbf{X}$ is Hermitian positive semi-definite. 
Furthermore, if $\trace(\mathbf{X}) = 1$, then
\[
	\trace(s\star \mathbf{X}) 
=	\sum\limits_{\weight\in\Weights_d} \mathbf{X}_{s^{-1}(\weight) \, s^{-1}(\weight)} 
=	\sum\limits_{\nu\in\Weights_d} \mathbf{X}_{\nu \nu} 
=	\trace(\mathbf{X}) 
=	1 . 
\]
Thus, $s \star \mathbf{X}$ is also feasible for $f_d$. 
Since the feasibility region of $f_d$ is convex, it also contains the convex combination $\tilde{\mathbf{X}}$. 
In particular, $\tilde{\mathbf{X}}$ is feasible for $f_{d,\weyl}$ with
\[
	\trace(\mathbf{mat}(f) \, \tilde{\mathbf{X}}) 
=	\frac{1}{\nops{\weyl}} \sum\limits_{s\in\weyl} \trace(\mathbf{mat}(f) \, (s\star \mathbf{X})) 
=	\frac{1}{\nops{\weyl}} \sum\limits_{s\in\weyl} \trace((s^{-1} \star \mathbf{mat}(f)) \, \mathbf{X}) 
=	\trace(\mathbf{mat}(f) \, \mathbf{X}) , 
\]
because $\mathbf{mat}(f) \in \mathrm{Toep}_d^\weyl$. 
Hence, we also have $f_d \geq f_{d,\weyl}$. 
\end{proof}

Recall from \Cref{eq_blockstructure} that matrices in $\mathrm{Toep}_d^\weyl$ can be block diagonalized with a change of basis $\mathbf{T}$. 

\begin{theorem}\label{thm_SDPBlock}
Assume that $\mathbf{T}^{\dagger}\,\mathbf{mat}(f)\,\mathbf{T}$ has blocks $\tilde{\mathbf{F}}_i\in\C^{m_i\times m_i}$. 
Then $f_d = f_{d,\weyl} = f_{d,\weyl}^{\mathrm{block}}$, where 
\[
	f_{d,\weyl}^{\mathrm{block}}
:=	\min\limits_{\mathbf{X} \in \mathrm{Toep}_d^\weyl} \, \sum\limits_{i=1}^h d_i\,\trace(\tilde{\mathbf{F}}_i \, \tilde{\mathbf{X}}_i) 
	\quad \mathrm{s.t.} \quad 
\trace(\mathbf{X}) = 1 ,\, \mathbf{T}^{\dagger}\,\mathbf{X}\,\mathbf{T} \mbox{\emph{ has blocks }} \tilde{\mathbf{X}}_i \succeq 0.
\]
\end{theorem}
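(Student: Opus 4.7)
The first equality $f_d = f_{d,\weyl}$ is already established in \Cref{prop_SDPInvariance}, so the real content is showing $f_{d,\weyl} = f_{d,\weyl}^{\mathrm{block}}$. My plan is to verify that conjugation by the unitary matrix $\mathbf{T}$ is a bijection between the feasible regions of the two programs, and that it preserves the objective.

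First I would note that since $f$ is $\weyl$-invariant, $\mathbf{mat}(f) \in \mathrm{Toep}_d^\weyl$, so by \Cref{eq_blockstructure} the conjugate $\mathbf{T}^\dagger\,\mathbf{mat}(f)\,\mathbf{T}$ already has the stated block structure with diagonal blocks $\tilde{\mathbf{F}}_i$ each repeated $d_i$ times. Similarly, for any $\mathbf{X}\in\mathrm{Toep}_d^\weyl$, the conjugate $\mathbf{T}^\dagger\,\mathbf{X}\,\mathbf{T}$ has diagonal blocks $\tilde{\mathbf{X}}_i$ each repeated $d_i$ times. Using unitarity $\mathbf{T}^{-1} = \mathbf{T}^\dagger$ and the cyclic property of the trace,
\[
\trace(\mathbf{mat}(f)\,\mathbf{X})
= \trace\bigl((\mathbf{T}^\dagger\,\mathbf{mat}(f)\,\mathbf{T})\,(\mathbf{T}^\dagger\,\mathbf{X}\,\mathbf{T})\bigr)
= \sum_{i=1}^h d_i\,\trace(\tilde{\mathbf{F}}_i\,\tilde{\mathbf{X}}_i),
\]
because the product of two matrices having the block structure of \Cref{eq_blockstructure} is again of that form, with diagonal blocks $\tilde{\mathbf{F}}_i\,\tilde{\mathbf{X}}_i$ appearing $d_i$ times. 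This identifies the two objective values.

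Next I would check the constraints. The trace is invariant under unitary conjugation, so $\trace(\mathbf{X}) = \trace(\mathbf{T}^\dagger\,\mathbf{X}\,\mathbf{T})$, meaning the normalization is unchanged. For positivity, unitary conjugation preserves the Hermitian positive semi-definite cone, and a block-diagonal Hermitian matrix is positive semi-definite if and only if each diagonal block is; hence $\mathbf{X}\succeq 0$ is equivalent to $\tilde{\mathbf{X}}_i\succeq 0$ for all $i$. These are exactly the conditions imposed in $f_{d,\weyl}^{\mathrm{block}}$.

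The main (minor) subtlety is bookkeeping the multiplicity $d_i$ correctly: one must recognize that both the objective contribution and the positivity condition for the $i$-th isotypic component only ``see'' the single block $\tilde{\mathbf{X}}_i$, even though it is repeated $d_i$ times inside $\mathbf{T}^\dagger\,\mathbf{X}\,\mathbf{T}$. Once this is clear, the map $\mathbf{X}\mapsto (\tilde{\mathbf{X}}_1,\ldots,\tilde{\mathbf{X}}_h)$ is a bijection between the feasible sets of $f_{d,\weyl}$ and $f_{d,\weyl}^{\mathrm{block}}$, preserving the objective, which yields the equality of the two optima.
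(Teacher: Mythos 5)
Your proof is correct and takes essentially the same approach as the paper: reduce to \Cref{prop_SDPInvariance} for $f_d = f_{d,\weyl}$, then use unitary invariance of the trace together with the block structure from \Cref{eq_blockstructure} to rewrite the objective, and note that $\mathbf{X}\succeq 0$ is equivalent to $\tilde{\mathbf{X}}_i\succeq 0$ for all $i$ since unitary conjugation preserves the PSD cone. The additional care you take with the multiplicity $d_i$ in the objective and the observation that the trace constraint is unaffected are welcome, but the argument is the same as the paper's.
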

\begin{proof}
Since the trace does not depend on the basis, we have 
\[
	\trace(\mathbf{mat}(f) \, \mathbf{X})
=	\trace(\mathbf{T}^{\dagger}\,\mathbf{mat}(f)\,\mathbf{T}\,\mathbf{T}^{\dagger} \, \mathbf{X} \, \mathbf{T}) 
=	\sum\limits_{i=1}^h d_i\,\trace(\tilde{\mathbf{F}}_i \, \tilde{\mathbf{X}}_i) .
\]
Note that $\mathbf{X}\succeq 0$ if and only if $\mathbf{T}^\dagger \, \mathbf{X} \, \mathbf{T}\succeq 0$, that is, for all $1\leq i\leq h$, we have $\tilde{\mathbf{X}}_i \succeq 0$. 
Thus, $f_{d,\weyl} = f_{d,\weyl}^{\mathrm{block}}$ and the statement follows from \Cref{prop_SDPInvariance}. 
\end{proof}

\begin{example}
We continue with \emph{\Cref{ex_SOScertificate}} with $n=1$, $d=1$ and $\weyl=\{\pm \mathrm{id}\}$. 
An arbitrary $\mathbf{X} \in \mathrm{Toep}_1$ shall have rows and columns indexed by $\Weights_1 = \{1,0,-1\}$. 
Then the group element $-\mathrm{id}\in\weyl$ acts on $\mathrm{Toep}_1$ by
\[
\begin{matrix} & 
	\begin{matrix} \phantom{-} \textcolor{gray}{1} & \phantom{-} \textcolor{gray}{0} & \textcolor{gray}{-1} \end{matrix} \\	
	\begin{matrix} \phantom{-} \textcolor{gray}{1} \\ \phantom{-} \textcolor{gray}{0} \\ \textcolor{gray}{-1} \end{matrix} & 
	\begin{pmatrix}
		\phantom{-} a & \phantom{-} b & \phantom{-} c \\
		\phantom{-} \overline{b} & \phantom{-} a & \phantom{-} b \\
		\phantom{-} \overline{c} & \phantom{-} \overline{b} & \phantom{-} a
	\end{pmatrix}
\end{matrix}
\phantom{-} \underrightarrow{\phantom{-} (-\mathrm{id}) \star \ldots \phantom{-}} \phantom{-}
\begin{matrix} & 
	\begin{matrix} \phantom{-} \textcolor{gray}{1} & \phantom{-} \textcolor{gray}{0} & \textcolor{gray}{-1} \end{matrix} \\	
	\begin{matrix} \phantom{-} \textcolor{gray}{1} \\ \phantom{-} \textcolor{gray}{0} \\ \textcolor{gray}{-1} \end{matrix} & 
	\begin{pmatrix}
	\phantom{-} a & \phantom{-} \overline{b} & \phantom{-} \overline{c} \\
		\phantom{-} b & \phantom{-} a & \phantom{-} \overline{b} \\
		\phantom{-} c & \phantom{-} b & \phantom{-} a
	\end{pmatrix}
\end{matrix} 
\]
for $a\in\R$ and $b,c\in\C$. 
Thus, the fixed point space $\mathrm{Toep}_1^\weyl$ consists of those $\mathbf{X} \in \mathrm{Toep}_1$ with $a,b,c \in \R$. 
The above action is induced by the orthogonal representation $\rho : \weyl \to \mathrm{GL}(\C^{\Weights_1}) = \mathrm{GL}_3(\C)$ with
\[
	\rho(-\mathrm{id})
=	\begin{pmatrix}0&0&1\\0&1&0\\1&0&0\end{pmatrix} .
\]
We have an isotypic decomposition
\[
		\C^3
=		\left(\sprod{\begin{pmatrix} 0\\1\\0 \end{pmatrix}}
\oplus	\sprod{\frac{1}{\sqrt{2}}\begin{pmatrix} 1\\0\\1 \end{pmatrix}} \right)
\oplus	\sprod{\frac{1}{\sqrt{2}}\begin{pmatrix} 1\\0\\-1 \end{pmatrix}}
\]
with $h=2$ irreducible representations with multiplicities $m_1=2, m_2=1$ and dimensions $d_1=1, d_2=1$. 
When $\mathbf{T}$ is the matrix with columns given by the above three basis vectors and 
$f$ is the $\weyl$-invariant trigonometric polynomial from \emph{\Cref{ex_SOScertificate}}, then
\[
	\mathbf{T}^\dagger\,\mathbf{X}\,\mathbf{T}
=	\begin{pmatrix}
	a			&	\sqrt{2}\,\overline{b}	&	0	\\
	\sqrt{2}\,b	&	a+c					&	0	\\
	0			&	0						&	a-c
\end{pmatrix} 
\quad \mbox{and} \quad 
\mathbf{T}^\dagger\,\mathbf{mat}(f)\,\mathbf{T}
=	\begin{pmatrix} 3 & -3\,\sqrt{2} & 0 \\ -3\,\sqrt{2} & 6 & 0 \\ 0 & 0 & 0 \end{pmatrix} . 
\]
Note that the condition $\trace(\mathbf{X}) = 1$ implies $a=1/3$. 
Thus,
\[
	f_{1,\weyl}^{\mathrm{block}}
=	\min\limits_{b,c\in\R} \, \trace\left( \begin{pmatrix} 3 & -3\,\sqrt{2} \\ -3\,\sqrt{2} & 6 \end{pmatrix} \, 
	\begin{pmatrix} 1/3 & \sqrt{2}\,b \\ \sqrt{2}\,b & 1/3+c \end{pmatrix} \right) 
\quad \mathrm{s.t.} \quad \begin{pmatrix} 1/3 & \sqrt{2}\,b \\ \sqrt{2}\,b & 1/3+c \end{pmatrix} \succeq 0 ,\, 1/3 \geq c .
\]
Finally, the optimal value $f^* = f_{1} = f_{1,\weyl} = f_{1,\weyl}^{\mathrm{block}} = 0$ is recovered with $b=-c=1/6$.
\end{example}

\section{Computing symmetry adapted bases}
\label{section_basis}
\setcounter{equation}{0}

We have seen the advantage of using a symmetry adapted basis in trigonometric optimization in the previous \Cref{section_optimization}. 
Using the computer algebra systems 
\textsc{Oscar}\footnote{\href{https://www.oscar-system.org}{https://www.oscar-system.org}} and 
\textsc{Maple}\footnote{\href{https://maplesoft.com}{https://maplesoft.com}}, 
we explain how to obtain said basis in practice with the algorithm from \cite[Ch. 2]{serre77}. 

\subsection{Projection onto isotypic components}

The semi-simple $\weyl$-module $\C^{\Weights_d}$ has an isotypic decomposition
\[
	\C^{\Weights_d}
=	V^{(1)} \oplus \ldots \oplus V^{(h)},
	\quad
	V^{(i)}
=	V^{(i)}_1 \oplus \ldots \oplus V^{(i)}_{m_i},
\]
where, for each $i$, the $V^{(i)}_j$ are isomorphic, simple $\weyl$-submodules with multiplicity $m_i$ and dimensions $d_i = \dim(V^{(i)}_j)$. 
The characters $\chi_i$ of the representations associated to the $V^{(i)}_j$ are the complex irreducible characters of $\weyl$. 

\begin{proposition}
\label{prop_projection}
\emph{\cite{serre77}}
For $1\leq i\leq h$, a projection of $V$ onto $V^{(i)}$ is given by
\[
	\mathbf{P}^{(i)}
:=	\frac{d_i}{\nops{\weyl}} \sum\limits_{s\in\weyl} \chi_i(s^{-1}) \, \vartheta(s) .
\]
\end{proposition}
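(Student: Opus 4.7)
The plan is to follow the classical argument in Serre's book: show that $\mathbf{P}^{(i)}$ is a $\weyl$-endomorphism of $\C^{\Weights_d}$, then use Schur's lemma together with the orthogonality relations of irreducible characters to identify it with the projection along the isotypic decomposition.

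First, I would verify that $\mathbf{P}^{(i)}$ commutes with $\vartheta(t)$ for every $t\in\weyl$. Conjugating $\mathbf{P}^{(i)}$ by $\vartheta(t)$ and reindexing the sum via $s\mapsto t s t^{-1}$ reduces the claim to invariance of $\chi_i$ under conjugation, which is immediate since characters are class functions. Hence $\mathbf{P}^{(i)}$ is a $\weyl$-module homomorphism, and in particular it preserves each isotypic component $V^{(j)}$.

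Next, fix a simple $\weyl$-submodule $W\subseteq V^{(j)}$ with character $\chi_j$. Since $\mathbf{P}^{(i)}|_W$ is a $\weyl$-endomorphism of a simple complex module, Schur's lemma yields $\mathbf{P}^{(i)}|_W = \lambda\,\mathrm{id}_W$ for some $\lambda\in\C$. Taking traces and using $\trace(\vartheta(s)|_W)=\chi_j(s)$ gives
\[
\lambda\,d_j \;=\; \trace(\mathbf{P}^{(i)}|_W) \;=\; \frac{d_i}{\nops{\weyl}}\sum_{s\in\weyl}\chi_i(s^{-1})\,\chi_j(s).
\]
By the first orthogonality relation for irreducible characters, the sum equals $\nops{\weyl}\,\delta_{ij}$, so $\lambda\,d_j = d_i\,\delta_{ij}$, which forces $\lambda = \delta_{ij}$ (note that $d_i = d_j$ when $i=j$).

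Consequently $\mathbf{P}^{(i)}$ restricts to the identity on every simple submodule of $V^{(i)}$ and to zero on every simple submodule of $V^{(j)}$ with $j\neq i$. Since each $V^{(j)}$ is a direct sum of such simple submodules by the isotypic decomposition \Cref{eq_IsotypicDecomposition}, linear extension shows $\mathbf{P}^{(i)}$ is the identity on $V^{(i)}$ and zero on $\bigoplus_{j\neq i} V^{(j)}$, which is exactly the asserted projection. The only non-routine ingredient is the orthogonality relation for characters, which I would simply quote from \cite[Ch. 2]{serre77}; everything else is a direct bookkeeping argument.
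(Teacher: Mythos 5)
The paper supplies no proof here---the proposition is cited directly to Serre---and your reconstruction is precisely the standard argument from that book: show $\mathbf{P}^{(i)}$ is $\weyl$-equivariant, invoke Schur's lemma on each simple constituent, then use the first orthogonality relation to pin down the scalar as $\delta_{ij}$. The argument is correct; the one small imprecision is that treating $\mathbf{P}^{(i)}|_W$ as an endomorphism \emph{of} $W$ requires $\mathbf{P}^{(i)}(W)\subseteq W$, which you should get from $\mathbf{P}^{(i)}$ being a $\C$-linear combination of the $\vartheta(s)$ (each of which stabilizes every $\weyl$-submodule) rather than from the commutation property you cite, since the latter only gives preservation of isotypic components, not of individual simple submodules inside them.
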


\subsection{Computing the basis}

To compute a basis for $\C^{\Weights_d}$ so that the $\weyl$--invariant matrices have the block diagonal structure from \Cref{eq_blockstructure}, 
we fix an index $1 \leq i \leq h$ and follow the steps below. 

\begin{enumerate}
\item For an irreducible representation of $\weyl$ with character $\chi_i$, choose representing matrices $(\mathbf{D}^{(i)}_{k\,\ell}(s))_{1 \leq k , \ell \leq d_i}$. 

\item For $1\leq \ell\leq d_i$, set 
\[
	\mathbf{P}^{(i)}_\ell
:=	\frac{d_i}{\nops{\weyl}} \sum_{s\in\weyl} \mathbf{D}^{(i)}_{1\,\ell} (s^{-1}) \, \vartheta(s). 
\]
The column dimension of this matrix is $m_i$. 

\item Compute an orthonormal basis $\left\{w^{(i)}_{1\,1},\ldots,w^{(i)}_{1\,m_i}\right\}$ for the column space of $\mathbf{P}^{(i)}_1$. 

\item For $2\leq \ell\leq d_i$ and $1\leq j \leq m_i$, set
\[
	w^{(i)}_{\ell\,j}
:=	\mathbf{P}^{(i)}_\ell \, w^{(i)}_{1\,j}.
\]
\end{enumerate}

Note that the representing matrices in step $1.$ are by no means unique. 
Finding $(\mathbf{D}^{(i)}_{k\,\ell}(s))_{1 \leq k , \ell \leq d_i}$ is a problem on its own and addressed in the example below. 

\begin{proposition}
\label{prop_ChangeOfBasis}
\emph{\cite{serre77,FasslerStiefel92}}
For $1\leq i\leq h$, $1\leq j\leq m_i$, the set $\left\{w^{(i)}_{1 \, j} , \ldots , w^{(i)}_{d_i \, j}\right\}$ is an orthonormal basis for $V^{(i)}_{j}$. 

Furthermore, the matrix
\[
	\tilde{\mathbf{T}}
:=	\bigg(
	\underbrace{w^{(1)}_{1 \, 1} , \ldots , w^{(1)}_{d_1 \, 1} }_{V^{(1)}_{1}}
	,
	\ldots
	,
	\underbrace{w^{(1)}_{1 \, m_1} , \ldots , w^{(1)}_{d_1 \, m_1} }_{V^{(1)}_{m_1}}
	,
	\ldots\ldots
	,
	\underbrace{w^{(h)}_{1 \, 1} , \ldots , w^{(h)}_{d_h \, 1} }_{V^{(h)}_{1}}
	,
	\ldots
	,
	\underbrace{w^{(h)}_{1 \, m_h} , \ldots , w^{(h)}_{d_h \, m_h} }_{V^{(h)}_{m_h}}
	\bigg)
\]
has rank $\nops{\Weights_d}$ and, for $s\in\weyl$, $\tilde{\mathbf{T}}^{\dagger}\,\vartheta(s)\,\tilde{\mathbf{T}}$ has $h$ blocks, each consisting of $m_i$ further equal $d_i\times d_i$ blocks. 

On the other hand, when the columns are reordered to 
\[
	\mathbf{T}
:=	\bigg(
	w^{(1)}_{1 \, 1} , \ldots , w^{(1)}_{1 \, m_1} 
	,
	\ldots
	,
	w^{(h)}_{d_1 \, 1} , \ldots , w^{(h)}_{d_1 \, m_1} 
	,
	\ldots\ldots
	,
	w^{(h)}_{1 \, 1} , \ldots , w^{(h)}_{1 \, m_h} 
	,
	\ldots
	,
	w^{(h)}_{d_h \, 1} , \ldots , w^{(h)}_{d_h \, m_h} 
	\bigg),
\]
then, for $\mathbf{X} \in \mathrm{Toep}_d^\weyl$, $\mathbf{T}^{\dagger}\,\mathbf{X}\,\mathbf{T}$ has $h$ blocks, each consisting of $d_i$ further equal $m_i\times m_i$ blocks (see \emph{\Cref{eq_blockstructure}}). 
\end{proposition}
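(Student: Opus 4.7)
The plan is to trace through the effect of the operators $\mathbf{P}^{(i)}_\ell$ on a preferred orthonormal basis of $\C^{\Weights_d}$ using the Schur orthogonality relations. First I would fix, for each $i$ and $j$, a reference orthonormal basis $\{e^{(i)}_{\ell,j}\}_{1\leq \ell \leq d_i}$ of $V^{(i)}_j$, chosen coherently across $j$ so that $\vartheta(s)\,e^{(i)}_{\ell,j} = \sum_k \mathbf{D}^{(i)}_{k,\ell}(s)\,e^{(i)}_{k,j}$; this is possible because the simple submodules in a fixed isotypic component are pairwise isomorphic and $\vartheta$ is unitary by \prpr{prop_SemiSimpleRep}. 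The Schur orthogonality relation $\tfrac{d_i}{\nops{\weyl}}\sum_s \mathbf{D}^{(i)}_{1,\ell}(s^{-1})\,\mathbf{D}^{(i')}_{k,p}(s) = \delta_{i\,i'}\delta_{k,1}\delta_{\ell,p}$ then yields $\mathbf{P}^{(i)}_\ell\,e^{(i')}_{p,q} = \delta_{i\,i'}\delta_{p,1}\,e^{(i)}_{\ell,q}$, so $\mathbf{P}^{(i)}_1$ is the orthogonal projection onto $\mathrm{span}_q\{e^{(i)}_{1,q}\}$, a space of dimension $m_i$. Hence step~3 is well-posed and produces $w^{(i)}_{1,j} = \sum_q \alpha^{(i)}_{j,q}\,e^{(i)}_{1,q}$ for some unitary $m_i \times m_i$ matrix $\alpha^{(i)}$.

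Applying $\mathbf{P}^{(i)}_\ell$ in step~4 then gives the explicit expansion $w^{(i)}_{\ell,j} = \sum_q \alpha^{(i)}_{j,q}\,e^{(i)}_{\ell,q}$. Orthonormality of the total family $\{w^{(i)}_{\ell,j}\}_{i,\ell,j}$ follows from the orthonormality of the $e^{(i)}_{\ell,q}$ and the unitarity of the $\alpha^{(i)}$. Moreover $\vartheta(s)\,w^{(i)}_{\ell,j} = \sum_q \alpha^{(i)}_{j,q}\sum_k \mathbf{D}^{(i)}_{k,\ell}(s)\,e^{(i)}_{k,q} = \sum_k \mathbf{D}^{(i)}_{k,\ell}(s)\,w^{(i)}_{k,j}$, so each $\tilde{V}^{(i)}_j := \mathrm{span}_\ell\{w^{(i)}_{\ell,j}\}$ is a $\weyl$-stable simple submodule whose representing matrix is exactly $\mathbf{D}^{(i)}$. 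This establishes the first assertion and, by direct inspection, displays $\mathbf{D}^{(i)}(s)$ as each $d_i \times d_i$ diagonal block inside the $i$-th block of $\tilde{\mathbf{T}}^\dagger\,\vartheta(s)\,\tilde{\mathbf{T}}$, with off-diagonal blocks between different isotypic components vanishing by Schur's lemma.

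For the reordered basis $\mathbf{T}$ and $\mathbf{X}\in\mathrm{Toep}_d^\weyl$, equivariance $\vartheta(s)\,\mathbf{X} = \mathbf{X}\,\vartheta(s)$ from \prpr{prop_SemiSimpleRep} combined with Schur's lemma forces $\mathbf{X}$ to preserve the isotypic decomposition and to act within the $i$-th isotypic component as an intertwiner between copies of a fixed simple module. Because the bases $\{w^{(i)}_{\ell,j}\}_\ell$ across different $j$ were all obtained by applying the \emph{same} operators $\mathbf{P}^{(i)}_\ell$ to starting vectors $w^{(i)}_{1,j}$, this intertwiner sends $w^{(i)}_{\ell,j}$ to a scalar $c^{(i)}_{j,j'}$, independent of $\ell$, times $w^{(i)}_{\ell,j'}$. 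In the reordered basis, $\mathbf{T}^\dagger\,\mathbf{X}\,\mathbf{T}$ is therefore block diagonal with $h$ blocks, the $i$-th of which consists of $d_i$ equal copies of the $m_i \times m_i$ matrix $(c^{(i)}_{j,j'})_{j,j'}$. I expect the main obstacle to be this last bookkeeping step: one must verify carefully that Schur's lemma really forces the $\ell$-independent scalar structure, which hinges on the coherent construction of the $w^{(i)}_{\ell,j}$ from a common family of projectors together with the reordering of the columns.
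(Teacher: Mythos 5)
The paper cites \cite{serre77,FasslerStiefel92} and does not give a proof of this proposition, so there is no in-paper argument to compare against; what you have written is essentially the standard Serre-style proof, and it is correct. Your key moves --- choosing coherent orthonormal reference bases $e^{(i)}_{\ell,j}$ with common representing matrices $\mathbf{D}^{(i)}$, computing $\mathbf{P}^{(i)}_\ell\,e^{(i')}_{p,q} = \delta_{i\,i'}\delta_{p,1}\,e^{(i)}_{\ell,q}$ from the Schur orthogonality relations, deducing the expansion $w^{(i)}_{\ell,j}=\sum_q\alpha^{(i)}_{j,q}\,e^{(i)}_{\ell,q}$ with $\alpha^{(i)}$ unitary, and then using equivariance plus Schur's lemma to force $\mathbf{X}\,w^{(i)}_{\ell,j}=\sum_{j'}c^{(i)}_{j',j}\,w^{(i)}_{\ell,j'}$ with $c^{(i)}$ independent of $\ell$ --- are exactly what the cited references do, and each step checks out. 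For the last step, it is worth being a little more explicit than ``this intertwiner sends $w^{(i)}_{\ell,j}$ to a scalar, independent of $\ell$, times $w^{(i)}_{\ell,j'}$'': writing $\mathbf{X}\,w^{(i)}_{\ell,j}=\sum_{\ell',j'}a^{\ell',j'}_{\ell,j}\,w^{(i)}_{\ell',j'}$ and comparing both sides of $\mathbf{X}\,\vartheta(s)=\vartheta(s)\,\mathbf{X}$ shows that the $d_i\times d_i$ matrices $A^{j',j}:=(a^{\ell',j'}_{\ell,j})_{\ell',\ell}$ commute with the irreducible $\mathbf{D}^{(i)}(s)$, hence $A^{j',j}=c^{(i)}_{j',j}\,\mathbf{I}$ by Schur, which gives the claim cleanly.

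One caveat you should make explicit, because it is the one place where the argument can silently fail: you need the chosen representing matrices $\mathbf{D}^{(i)}$ to be unitary. Your construction of orthonormal $e^{(i)}_{\ell,j}$ satisfying $\vartheta(s)\,e^{(i)}_{\ell,j}=\sum_k\mathbf{D}^{(i)}_{k,\ell}(s)\,e^{(i)}_{k,j}$ is only possible when $\mathbf{D}^{(i)}$ is unitary (since $\vartheta$ is unitary and the $V^{(i)}_j$ are $\vartheta$-stable, any orthonormal basis of $V^{(i)}_j$ yields unitary representing matrices). If the $\mathbf{D}^{(i)}$ from step~1 are not unitary --- as in the paper's examples over $\Z$, $\Z[\zeta]$, or $\Q$ --- then the $w^{(i)}_{\ell,j}$ with $\ell\geq 2$ need not be normalized or mutually orthogonal, $\tilde{\mathbf{T}}$ and $\mathbf{T}$ need not be unitary, and the conjugation must be by $\mathbf{T}^{-1}$ rather than $\mathbf{T}^{\dagger}$; the block structure survives, but the ``orthonormal basis'' claim does not. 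This is a feature of the proposition as stated, not a flaw of your argument specifically, but since you invoke orthonormality and $\dagger$ throughout, a sentence noting the unitarity hypothesis would close the gap.
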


\subsection{Example}

We consider the Weyl group of the $\RootA[2]$ root system $\weyl \cong \mathfrak{S}_3$. 
The group has order $\nops{\weyl} = 6$ and $ h = 3$ irreducible representations, which is encoded by the character table $\chi$. 

\subsubsection{Computing multiplicities}

We think of $\chi \in \mathrm{GL}_h(\Z)$ as an invertable integer matrix with rows indexed by the irreducible representations $1\leq i\leq h$ 
and columns indexed by the conjugacy classes of the group elements $s\in\weyl$,
such that
\[
\trace(\vartheta(s)) = m_1\,\chi_{1}(s) + \ldots + m_h\,\chi_{h}(s) .
\]
In particular, the multiplicities $m_i$ are the solutions of a linear system and the dimensions are $d_i=\chi_{i}(\mathrm{id})$, where $\mathrm{id} \in \weyl$ is the identity element. 
In the case of $\weyl \cong \mathfrak{S}_3$, we have
\[
	\chi
=	\begin{matrix}
	& \begin{matrix}  \phantom{-} \textcolor{gray}{\mathrm{id}} & \phantom{..} \textcolor{gray}{s_1} \phantom{..} & \textcolor{gray}{s_1\,s_2} \end{matrix} \\
	\begin{matrix} \phantom{-} \textcolor{gray}{1} \\ \phantom{-} \textcolor{gray}{2} \\ \phantom{-} \textcolor{gray}{3} \end{matrix} & 
	\begin{pmatrix} \phantom{-} 1 \phantom{.}	&			 -  1 \phantom{-}	&  \phantom{-} 1\\ 
					\phantom{-} 2 \phantom{.}	&	\phantom{-} 0 \phantom{-}	&   		-  1	&\\ 
					\phantom{-} 1 \phantom{.}	&	\phantom{-} 1 \phantom{-}	&  \phantom{-} 1 	&\end{pmatrix}
	\end{matrix} , 
\] 
where $s_1 = (1\,2),s_2 = (2\,3) \in \weyl$ are the generating transpositions of $\mathfrak{S}_3$. 
With the computer algebra system \textsc{Oscar}\footnote{\href{https://www.oscar-system.org}{https://www.oscar-system.org}}, 
this matrix can be computed as follows. 
\begin{verbatim}
julia> using Oscar;
julia> W = symmetric_group(3);
julia> X = character_table(W);
\end{verbatim}
Note that the first and third row of $\chi$ correspond to the sign and trivial representation of dimension $1$, respectively. 
By computing $\trace(\vartheta(s))$ and solving for the $m_i$, we find the multiplicities in \Cref{table_A2_multiplicities}. 
\begin{table}[H]
	\begin{center}
		\begin{tabular}{|c||c|c|c|c|c|c|}
			\hline
					&	$d=1$	&	$d=2$	&	$d=3$	&	$d=4$	&	$d=5$	&	$d=6$	\\
			\hline
			\hline
			$m_1$	&	$ 0$	&	$ 1$	&	$ 3$	&	$ 6$	&	$10$	&	$15$	\\
			\hline
			$m_2$	&	$ 2$	&	$ 6$	&	$12$	&	$20$	&	$30$	&	$42$	\\
			\hline
			$m_3$	&	$ 3$	&	$ 6$	&	$10$	&	$15$	&	$21$	&	$28$	\\
			\hline
		\end{tabular}
	\end{center}
\caption{The multiplicities $m_i$ of the irreducible representations of $\weyl \cong \mathfrak{S}_3$ 
	occurring in the representation $\vartheta$ by which $\weyl$ acts on the trigonometric polynomials up to degree $d$. 
	Those are the $m_1$: sign-, $m_2$: reflection- and $m_3$: trivial representation.}
\label{table_A2_multiplicities}
\end{table}
For $d=1$, we have $\Weights_1 = \{0\} \cup \weyl\fweight{2} \cup \weyl\fweight{1} = \{0,-\fweight{1},\fweight{1}-\fweight{2},\fweight{2},-\fweight{2},\fweight{2}-\fweight{1},\fweight{1}\}$ and, 
by \Cref{prop_projection}, the projections of $\C^{\Weights_1} = \C^7$ onto $V_1, V_2, V_3$ are 
\[
	\mathbf{P}^{(1)}
=	0 ,
	\quad
	\mathbf{P}^{(2)}
=	\frac{1}{3} \begin{pmatrix}
	 0 &  0 &  0 &  0 &  0 &  0 &  0 \\
	 0 &  2 & -1 & -1 &  0 &  0 &  0 \\
	 0 & -1 &  2 & -1 &  0 &  0 &  0 \\
	 0 & -1 & -1 &  2 &  0 &  0 &  0 \\
	 0 &  0 &  0 &  0 &  2 & -1 & -1 \\
	 0 &  0 &  0 &  0 & -1 &  2 & -1 \\
	 0 &  0 &  0 &  0 & -1 & -1 &  2 	
\end{pmatrix} ,
	\quad
	\mathbf{P}^{(3)}
=	\frac{1}{3} \begin{pmatrix} 
	3 & 0 & 0 & 0 & 0 & 0 & 0 \\
	0 & 1 & 1 & 1 & 0 & 0 & 0 \\
	0 & 1 & 1 & 1 & 0 & 0 & 0 \\
	0 & 1 & 1 & 1 & 0 & 0 & 0 \\
	0 & 0 & 0 & 0 & 1 & 1 & 1 \\
	0 & 0 & 0 & 0 & 1 & 1 & 1 \\
	0 & 0 & 0 & 0 & 1 & 1 & 1 	
\end{pmatrix} .
\]

\subsubsection{Computing representing matrices}

Next, we need the representing matrices associated to the irreducible characters. 
The representations associated to the characters $\chi_1$ and $\chi_2$ are $1$-dimensional. 
Hence, the representing matrices are 
\[
	\mathbf{D}^{(1)}(s_1)
=	\mathbf{D}^{(1)}(s_2)
=	(-1)
	\tbox{and}
	\mathbf{D}^{(3)}(s_1)
=	\mathbf{D}^{(3)}(s_2)
=	(1) .
\]
For $\chi_2$, we discuss three possible approaches. 

\subsubsection*{Over $\Z$ with \textsc{Maple}}

This strategy works, because $\weyl$ is a Weyl group (see \Cref{appendix_root_systems}): 
It acts by permutation of coordinates on $V = \R^3/\sprod{[1,1,1]^t}$, 
which is a simple $\weyl$-module with character $\chi_2$. 
The root system $\RootA[2]$ from \Cref{example_rootsystemA2} is a root system for $V$ and 
the weight lattice $\Weights = \Z \, \fweight{1} \oplus \Z \, \fweight{2}$ is hexagonal. 
The reflection associated to the root $\roots_i$ takes $\fweight{j}$ to
\[	
	\fweight{j} - \sprod{\fweight{j},\roots_i^\vee} \, \roots_i
=	\begin{cases}
		\fweight{2} - \fweight{1},	&	\mbox{if } i=j=1	\\
		\fweight{2},				&	\mbox{if } i=1,\,j=2\\
		\fweight{1},				&	\mbox{if } i=2,\,j=1\\
		\fweight{1} - \fweight{2},	&	\mbox{if } i=j=2
	\end{cases} .
\]
The row vectors of the representing matrices are the coordinates in the basis of the $\fweight{i}$, that is,
\[
	\mathbf{D}^{(2)}(s_1)
=	\begin{pmatrix}
	-1 & 1 \\ 0 & 1
	\end{pmatrix}
	\tbox{and}
	\mathbf{D}^{(2)}(s_2)
=	\begin{pmatrix}
	1 & 0 \\ 1 & -1
	\end{pmatrix} .
\]
To obtain these matrices, one can use the following command from the \textsc{Maple} package 
\textsc{GeneralizedChebyshev}: 
\begin{verbatim}
maple> with(LinearAlgebra): 
maple> read("GeneralizedChebyshev.mpl"): with(GeneralizedChebyshev):
maple> ZWeylGroupGen(A,2);
\end{verbatim}

\subsubsection*{Over $\Z[\zeta]$ with \textsc{Oscar}}

Since $\weyl$ is a reflection group, it can be realized over $\Q[\zeta]$, 
where $\zeta \in \C$ is a third root of unity \cite{shephardtodd54}. 
The matrices of $s_1$ and $s_2$ in this reflection representation are
\[
	\mathbf{D}^{(2)}(s_1)
=	\begin{pmatrix}
	0 & -\zeta-1 \\ \zeta & 0
\end{pmatrix}
\tbox{and}
\mathbf{D}^{(2)}(s_2)
=	\begin{pmatrix}
	0 & 1 \\ 1 & 0
\end{pmatrix} .
\]
Note that the coefficients are actually in $\Z$. 
With \textsc{Oscar}, those are obtained as follows.
\begin{verbatim}
julia> using Oscar;
julia> W = symmetric_group(3);
julia> V = irreducible_modules(W);
julia> g1, g2 = V[3].ac[1].matrix, V[3].ac[2].matrix;
julia> s1, s2 = g2, g1*g2
\end{verbatim}
(In the \textsc{Oscar} version used for this article (0.12.0-DEV), the representation V[3] corresponds to the second irreducible character above. 
This might change in future versions.)

\subsubsection*{Over $\Q$ with \textsc{Oscar}}

Alternatively, one can immediately compute rational matrices, but the entries are usually not integral. 
Those are
\[
	\mathbf{D}^{(2)}(s_1)
=	\begin{pmatrix}
	-7/10 & -1/5 \\ -51/20 & 7/10
	\end{pmatrix}
	\tbox{and}
	\mathbf{D}^{(2)}(s_2)
=	\begin{pmatrix}
	-7/15 & 8/15 \\ 22/15 & 7/15
	\end{pmatrix} 
\]
and obtained as follows.
\begin{verbatim}
julia> using Oscar;
julia> W = symmetric_group(3);
julia> V = Oscar.irreducible_modules(AnticNumberField, W, minimal_degree = true);
julia> g = map(z -> map_entries(QQ, z), matrix.(action(V[3])));
julia> s1, s2 = g[2], g[1]*g[2]
\end{verbatim}

\subsubsection{Block diagonalization}

We now compute the symmetry adapted basis. 
The basis itself of course depends on the used method to obtain $\mathbf{D}^{(2)}(s_1)$ and $\mathbf{D}^{(2)}(s_2)$, but since the SDP is invariant under change of basis, the equivariant matrices always have the same block diagonal structure. 
Following the steps preceding \Cref{prop_ChangeOfBasis}, we can obtain a symmetry adapted basis for the isotypic decomposition $\C^{\Weights_1}$. 
A basis for the space of trigonometric polynomials up to degree $1$ according to this decomposition is given by 
\[
	\underbrace{
		\mathfrak{e}^{ - \fweight{1}}
		- \mathfrak{e}^{\fweight{1} - \fweight{2}}
		, \hspace{.2cm}
		\mathfrak{e}^{\fweight{1} - \fweight{2}}
		- \mathfrak{e}^{\fweight{2}}
		, \quad
		\mathfrak{e}^{\fweight{2} - \fweight{1}}
		- \mathfrak{e}^{\fweight{1}}
		, \hspace{.2cm}
		\mathfrak{e}^{ - \fweight{2}}
		- \mathfrak{e}^{\fweight{2} - \fweight{1}}
	}_{V^{(2)}_1 \times V^{(2)}_2}
	, \hspace{.4cm}
	\underbrace{
		1
		, \hspace{.2cm}
		\mathfrak{e}^{ - \fweight{1}}
		+ \mathfrak{e}^{\fweight{1} - \fweight{2}}
		+ \mathfrak{e}^{\fweight{2}}
		, \hspace{.2cm}
		\mathfrak{e}^{ - \fweight{2}}
		+ \mathfrak{e}^{\fweight{2} - \fweight{1}}
		+ \mathfrak{e}^{\fweight{1}}
	}_{V^{(3)}_1 \times V^{(3)}_2 \times V^{(3)}_3}
	.
\]
It is the output of the following \textsc{Maple} command. 
\begin{verbatim}
maple> with(LinearAlgebra): 
maple> read("GeneralizedChebyshev.mpl"): with(GeneralizedChebyshev):
maple> IsotypicDecompositionBasisMultiplicative(A,2,1);
\end{verbatim}
By putting the basis elements in the correct order, we obtain a matrix $\tilde{\mathbf{T}}$, so that, for $s\in\weyl$, the matrices $\tilde{\mathbf{T}}^\dagger \, \vartheta(s) \, \tilde{\mathbf{T}}$ have the block diagonal structure 
\[
	\begin{pmatrix}
			\begin{bmatrix} \textcolor{red}{\blacksquare} & \textcolor{red}{\blacksquare} \\ \textcolor{red}{\blacksquare} & \textcolor{red}{\blacksquare} \end{bmatrix} & & & &	\\
	& 		\begin{bmatrix} \textcolor{red}{\blacksquare} & \textcolor{red}{\blacksquare} \\ \textcolor{red}{\blacksquare} & \textcolor{red}{\blacksquare} \end{bmatrix} & & & 		\\
	& & 	\begin{bmatrix} \textcolor{blue}{\blacksquare} \end{bmatrix} & & \\
 	& & & 	\begin{bmatrix} \textcolor{blue}{\blacksquare} \end{bmatrix} & \\
	& & & &	\begin{bmatrix} \textcolor{blue}{\blacksquare} \end{bmatrix} \\
	 
	\end{pmatrix},
\]
that is, $m_2=2$ blocks of size $d_2=2$ and $m_3=3$ blocks of size $d_3=1$. 

By reordering the columns, we obtain a matrix $\mathbf{T}$, so that, for $\mathbf{X} \in \mathrm{Toep}_d^\weyl$, the matrices ${\mathbf{T}}^\dagger \, \mathbf{X} \, {\mathbf{T}}$ have the block diagonal structure
\[
\begin{pmatrix}
	\begin{bmatrix} \textcolor{red}{\blacksquare} & \textcolor{red}{\blacksquare} \\ \textcolor{red}{\blacksquare} & \textcolor{red}{\blacksquare} \end{bmatrix} & & \\
	& \begin{bmatrix} \textcolor{red}{\blacksquare} & \textcolor{red}{\blacksquare} \\ \textcolor{red}{\blacksquare} & \textcolor{red}{\blacksquare} \end{bmatrix} & \\
	& & \begin{bmatrix} \textcolor{blue}{\blacksquare} & \textcolor{blue}{\blacksquare} & \textcolor{blue}{\blacksquare} \\
	\textcolor{blue}{\blacksquare} & \textcolor{blue}{\blacksquare} & \textcolor{blue}{\blacksquare} \\
	\textcolor{blue}{\blacksquare} & \textcolor{blue}{\blacksquare} & \textcolor{blue}{\blacksquare} \end{bmatrix} \\
\end{pmatrix},
\]
that is, $d_2=2$ equal blocks of size $m_2=2$ and $d_3=1$ equal block of size $m_3=3$. 

Specifically, a matrix $\mathbf{X} \in \mathrm{Toep}_1^\weyl$ is of the form
\[
\mathbf{X}
=	\begin{pmatrix}
	a & b & b & b & c & c & c \\
	\overline{b} & d & e & e & \mathrm{f} & \mathrm{f} & g \\
	\overline{b} & e & d & e & \mathrm{f} & g & \mathrm{f} \\
	\overline{b} & e & e & d & g & \mathrm{f} & \mathrm{f} \\
	\overline{c} & \overline{\mathrm{f}} & \overline{\mathrm{f}} & \overline{g} & h & k & k \\
	\overline{c} & \overline{\mathrm{f}} & \overline{g} & \overline{\mathrm{f}} & k & h & k \\
	\overline{c} & \overline{g} & \overline{\mathrm{f}} & \overline{\mathrm{f}} & k & k & h \\
	\end{pmatrix} 
	\phantom{-} \underrightarrow{\phantom{-} \mathbf{T}^\dagger \, \ldots \, \mathbf{T} \phantom{-}} \phantom{-}
	\begin{pmatrix}
	\textcolor{red}{d-e} & \textcolor{red}{\mathrm{f}-g} & 0 & 0 & 0 & 0 & 0 \\
	\textcolor{red}{\overline{\mathrm{f}}-\overline{g}} & \textcolor{red}{h-k} & 0 & 0 & 0 & 0 & 0 \\
	0 & 0 & \textcolor{red}{d-e} & \textcolor{red}{\mathrm{f}-g} & 0 & 0 & 0 \\
	0 & 0 & \textcolor{red}{\overline{\mathrm{f}}-\overline{g}} & \textcolor{red}{h-k} & 0 & 0 & 0 \\
	0 & 0 & 0 & 0 & \textcolor{blue}{a} & \textcolor{blue}{\sqrt{3}\,b} & \textcolor{blue}{\sqrt{3}\,c} \\
	0 & 0 & 0 & 0 & \textcolor{blue}{\sqrt{3}\,\overline{b}} & \textcolor{blue}{d+2\,e} & \textcolor{blue}{2\,\mathrm{f}+g} \\
	0 & 0 & 0 & 0 & \textcolor{blue}{\sqrt{3}\,\overline{c}} & \textcolor{blue}{2\,\overline{\mathrm{f}}+\overline{g}} & \textcolor{blue}{h+2\,k} \\
	\end{pmatrix} .
\]
for some $a,d,e,h,k\in\R,\,b,c,\mathrm{f},g\in\C$. 
Consider the $\weyl$-invariant trigonometric polynomial 
\[
	f
:=	6
+	4\,\mathfrak{e}^{  \fweight{1}}
+	4\,\mathfrak{e}^{  \fweight{2}}
+	4\,\mathfrak{e}^{  \fweight{2} -  \fweight{1}}
+	4\,\mathfrak{e}^{  \fweight{1} -  \fweight{2}}
+	4\,\mathfrak{e}^{- \fweight{1}}
+	4\,\mathfrak{e}^{- \fweight{2}}
+	2\,\mathfrak{e}^{ 2\fweight{1}}
+	2\,\mathfrak{e}^{ 2\fweight{2}}
+	2\,\mathfrak{e}^{ 2\fweight{2} - 2\fweight{1}}
+	2\,\mathfrak{e}^{ 2\fweight{1} - 2\fweight{2}}
+	2\,\mathfrak{e}^{-2\fweight{1}}
+	2\,\mathfrak{e}^{-2\fweight{2}}
\]
with $a = d = 6,\,b = c = \mathrm{f} = 7,\,e = k = 0, g = 14$. 
Then we have
\begin{align*}
		f^*
\geq 	f_{1,\weyl}^{\mathrm{block}}
=&		\min\limits_{\mathbf{X} \in \mathrm{Toep}_1^\weyl} \, \sum\limits_{i=1}^h d_i\,\trace(\tilde{\mathbf{F}}_i \, \tilde{\mathbf{X}}_i) 
		\quad 	\mathrm{s.t.} \quad 
		\trace(\mathbf{X}) = 1 ,\, \mathbf{T}^\dagger\,\mathbf{X}\,\mathbf{T} \mbox{\emph{ has blocks }} \tilde{\mathbf{X}}_i \succeq 0. \\
=&		\min\limits_{a,\ldots,k} \, 2\,(6\,d - 6\,e - 7\,f + 7\,g - 7\,\overline{f} + 7\,\overline{g} + 6\,h - 6\,k)\\
 &		\quad + (6\,a + 21\,b + 21\,c + 21\,\overline{b} + 6\,d + 12\,e + 56\,f + 28\,g + 21\,\overline{c} + 56\,\overline{f} + 28\,\overline{g} + 6\,h + 12\,k)\\
 &		\mathrm{s.t.} \quad 
		a+3\,d+3\,h = 1,\,
		\textcolor{red}{
		\begin{pmatrix}
			d-e & \mathrm{f}-g \\ \overline{\mathrm{f}}-\overline{g} & h-k
		\end{pmatrix}}
		\succeq 0,\,
		\textcolor{blue}{
		\begin{pmatrix}
			a & \sqrt{3}\,b & \sqrt{3}\,c \\ \sqrt{3}\,\overline{b} & d+2\,e & 2\,\mathrm{f}+g \\\sqrt{3}\,\overline{c} & 2\,\overline{\mathrm{f}}+\overline{g} & h+2\,k
		\end{pmatrix}}
		\succeq 0 .
\end{align*}
Now, we only need to certify positive semi-definiteness for Hermitian matrices with $2^2 + 3^2 = 13$ real entries instead of matrices with $7^2 = 49$ entries. 

\section*{Conclusion}

For large $n$, $d$ and $\nops{\weyl}$, solving the SDP via \Cref{thm_SDPBlock} is vastly more efficient than without the symmetry reduction via \Cref{eq_MinRelaxation}, because 
(i) the group symmetry reduces the number of variables and 
(ii) the matrices are blockdiagonal and thus the matrix size is reduced from $\nops{\Weights_d}^2$ to $m_1^2,\,\ldots,\,m_h^2$ (which simplifies the certification of positive semi-definiteness). 

The bottleneck in the presented approach is the computation of the representing matrices $\mathbf{D}^{(i)}_{k\,\ell}(s)$ for the irreducible representations. 
In the examples, we have used computer algebra systems, such as \textsc{Maple} and \textsc{Oscar}. 
Current work in progress revolves around the implementation of the presented approach to the point where the input consists of an invariant trigonometric polynomial $f$, a finite group $\weyl$ and an order of the relaxation $d$ and the output of the matrices defining the SDP from \Cref{thm_SDPBlock}. 

We compare with the approach via generalized Chebyshev polynomials in \cite{chromatic22,chromaticissac22}. 
The SDP matrices have blocks and the sizes of the blocks are
\begin{enumerate}
	\item dense: $\nops{\Weights_d}^2$ \hspace{.2cm} ($1$ Hermitian block), \phantom{$\sum\limits_{i=1}^h$}
	\item Chebyshev: $\frac{\nops{\Weights_d}^2 + n^2\, \nops{\Weights_{d-n}}^2}{\nops{\weyl}^2}$ \hspace{.2cm} ($2$ symmetric blocks), 
	\item symmetry adapted basis: $\sum\limits_{i=1}^h d_i\,\left(m_i^{(d)}\right)^2$ \hspace{.2cm} ($h$ Hermitian blocks with $d_i$ equal subblocks). 
\end{enumerate}
The dense approach is in any case the least efficient. 
To decide, whether $2.$ or $3.$ provides smaller matrices, one can compute $\nops{\Weights_d}$, $\nops{\weyl}$, $d_i$ and $m_i^{(d)}$ using the \textsc{Maple} package \textsc{GeneralizedChebyshev}: 
\begin{verbatim}
maple> with(LinearAlgebra): read("GeneralizedChebyshev.mpl"): with(GeneralizedChebyshev): 
maple> Type,n,d := A,2,1; 
maple> nops(WeightList(Type,n,d));              # number of lattice points up to degree d
maple> WeylGroupOrder(Type,n);                                  # order of the Weyl group
maple> IrredRepDimMultiplicative(Type,n);     # dimensions of irreducible representations
maple> IsotypicDecompositionMultiplicitiesMultiplicative(Type,n,d);      # multiplicities
\end{verbatim}

\section*{Open problems}

How to decide a priori which approach gives a better numerical result is unkown to the author. 
By \Cref{thm_SDPBlock}, we already know that the approach via symmetry adapted basis gives the same solution as the one for the dense approach. 
The exponential convergence rate is proven in \cite{bach22}. 
The Chebyshev approach on the other hand uses polynomial instead of trigonometric optimization techniques. 
In this case, the convergence rate is given in \cite{baldimourrain23}. 
This article now provides the means to compare the two approaches, which the author intends to address in future work. 

For the symmetric group $\mathfrak{S}_3$ in \Cref{table_A2_multiplicities}, 
one can observe a pattern for the multiplicities, namely 
$m_1^{(d+1)} = m_1^{(d)} +   d    $, 
$m_2^{(d+1)} = m_2^{(d)} + 2 d + 2$ and
$m_3^{(d+1)} = m_3^{(d)} +   d + 2$. 
It would be very interesting to prove more general formulae for the behavior of the multiplicities. 

\section*{Acknowledgments}

I wish to thank Evelyne Hubert (Sophia Antipolis), Cordian Riener (Troms\o) and Ulrich Thiel (Kaiserslautern) 
for their lessons and fruitful discussions on representation theory. 

This work has been supported by the Deutsche Forschungsgemeinschaft transregional collaborative research centre (SFB--TRR) 195 ``Symbolic Tools in Mathematics and their Application''. 

\pagestyle{fancy}
\lhead[T. Metzlaff]{}
\rhead[]{}
\cfoot{}
\rfoot[\today]{\thepage}
\lfoot[\thepage]{}
\renewcommand{\headsep}{8mm}
\renewcommand{\footskip}{15mm}

\bibliographystyle{alpha}
{\bibliography{mybib.bib}}

\newcommand{\etalchar}[1]{$^{#1}$}
\begin{thebibliography}{BWFW08}

\bibitem[ALRT13]{riener2013}
L.~Andrén, J.-B. Lasserre, C.~Riener, and T.~Theobald.
\newblock {Exploiting Symmetries in SDP-Relaxations for Polynomial
  Optimization}.
\newblock {\em Mathematics of Operations Research}, 38(1):122--141, 2013.

\bibitem[BDFV14]{BdCOV}
C.~Bachoc, E.~DeCorte, F.~de~Oliveira Filho, and F.~Vallentin.
\newblock Spectral bounds for the independence ratio and the chromatic number
  of an operator.
\newblock {\em Israel Journal of Mathematics}, 202(1):227--254, 2014.

\bibitem[Ber09]{Bergeron09}
F.~Bergeron.
\newblock {\em Algebraic combinatorics and coinvariant spaces}.
\newblock CMS Treatises in Mathematics, Canadian Mathematical Society, 2009.

\bibitem[BF23]{faulhuber23}
L.~B\'{e}termin and M.~Faulhuber.
\newblock Maximal theta functions universal optimality of the hexagonal lattice
  for madelung-like lattice energies.
\newblock {\em Journal d'Analyse Math\'{e}matique}, 2023.

\bibitem[BM23]{baldimourrain23}
L.~Baldi and B.~Mourrain.
\newblock {On the effective Putinar’s Positivstellensatz and moment
  approximation}.
\newblock {\em Mathematical Programming}, 200:71--103, 2023.

\bibitem[Bou68]{bourbaki456}
N.~Bourbaki.
\newblock {\em \'{E}l\'ements de math\'ematique. {F}asc. {XXXIV}. {G}roupes et
  alg\`ebres de {L}ie. {C}hapitre {IV} -- {C}hapitre {VI}}.
\newblock Actualit\'es Scientifiques et Industrielles, No. 1337. Hermann,
  Paris, 1968.

\bibitem[BR23]{bach22}
F.~Bach and A.~Rudi.
\newblock Exponential convergence of sum-of-squares hierarchies for
  trigonometric polynomials.
\newblock {\em SIAM Journal on Optimization}, 33(3):2137--2159, 2023.

\bibitem[BWFW08]{bai08}
X.~Bai, H.~Wei, K.~Fujisawa, and Y.~Wang.
\newblock Semidefinite programming for optimal power flow problems.
\newblock {\em International Journal of Electrical Power \& Energy Systems},
  30(6):383--392, 2008.

\bibitem[CKM{\etalchar{+}}22]{Viazovska22}
H.~Cohn, A.~Kumar, S.~Miller, D.~Radchenko, and M.~Viazovska.
\newblock Universal optimality of the {E}8 and {L}eech lattices and
  interpolation formulas.
\newblock {\em Annals of Mathematics}, 196(3):983--1082, 2022.

\bibitem[CKW20]{Choudhary20}
A.~Choudhary, S.~Kachanovich, and M.~Wintraecken.
\newblock Coxeter triangulations have good quality.
\newblock {\em Mathematics in Computer Science}, 14:141--176, 2020.

\bibitem[CS99]{conway1988a}
J.~Conway and N.~Sloane.
\newblock {\em {Sphere packings, lattices and groups}}, volume 290 of {\em
  Grundlehren der Mathematischen Wissenschaften}.
\newblock Springer-Verlag, New York, third edition, 1999.

\bibitem[DM85]{dym85}
H.~Dym and H.~McKean.
\newblock {\em Fourier Series and Integrals}.
\newblock Probability and Mathematical Statistics. Elsevier Science, 1985.

\bibitem[Dum07]{dumitrescu07}
B.~Dumitrescu.
\newblock {\em Positive {T}rigonometric {P}olynomials and {S}ignal {P}rocessing
  {A}pplications}.
\newblock Signals and {Communication} {Technology}. Springer Netherlands, 2007.

\bibitem[FR09]{FaugereRahmany2009}
J.-C. Faugere and S.~Rahmany.
\newblock Solving systems of polynomial equations with symmetries using
  {SAGBI}-{G}r\"{o}bner bases.
\newblock In {\em Proceedings of the 2009 International Symposium on Symbolic
  and Algebraic Computation, Seoul, Republic of Korea}, ISSAC'09, pages
  151--158. 2009.

\bibitem[FS92]{FasslerStiefel92}
A.~F\"{a}ssler and E.~Stiefel.
\newblock {\em {Group Theoretical Methods and Their Applications}}.
\newblock Birkh\"{a}user, Boston, MA, 1992.

\bibitem[Gat90]{Gatermann1990}
K.~Gatermann.
\newblock Symbolic solution of polynomial equation systems with symmetry.
\newblock In {\em Proceedings of the 1990 International Symposium on Symbolic
  and Algebraic Computation, Tokyo, Japan}, ISSAC'90, pages 112--119. 1990.

\bibitem[Gat00]{gatermann2000}
K.~Gatermann.
\newblock Orbit space reduction.
\newblock In {\em Computer {Algebra} {Methods} for {Equivariant} {Dynamical}
  {Systems}}, Lecture {Notes} in {Mathematics}, pages 105--134. Springer,
  Berlin, Heidelberg, 2000.

\bibitem[GP04]{Gatermann04}
K.~Gatermann and P.~Parrilo.
\newblock Symmetry groups, semidefinite programs, and sums of squares.
\newblock {\em Journal of Pure and Applied Algebra}, 192:95--128, 2004.

\bibitem[HL13]{HubertLabahn2013}
E.~Hubert and G.~Labahn.
\newblock Scaling invariants and symmetry reduction of dynamical systems.
\newblock {\em Foundations of Computational Mathematics}, 13(4):479--516, 2013.

\bibitem[HL16]{HubertLabahn2016}
E.~Hubert and G.~Labahn.
\newblock Computation of the invariants of finite abelian groups.
\newblock {\em Mathematics of Computations}, 85(302):3029--3020, 2016.

\bibitem[HMMR22]{chromaticissac22}
E.~Hubert, T.~Metzlaff, P.~Moustrou, and C.~Riener.
\newblock T-orbit spaces of multiplicative actions and applications.
\newblock {\em ACM Communications in Computer Algebra}, 56(2):72--75, 2022.

\bibitem[HMMR24]{chromatic22}
E.~Hubert, T.~Metzlaff, P.~Moustrou, and C.~Riener.
\newblock Optimization of trigonometric polynomials with crystallographic
  symmetry and spectral bounds for set avoiding graphs.
\newblock to appear in \textit{Mathematical Programming}, 2024.

\bibitem[JM18]{josz18}
C.~Josz and D.~Molzahn.
\newblock {Lasserre hierarchy for large scale polynomial optimization in real
  and complex variables}.
\newblock {\em SIAM Journal of Optimization}, 28(2):1017--1048, 2018.

\bibitem[KAH05]{kunsch05}
H.~K\"{u}nsch, E.~Agrell, and F.~Hamprecht.
\newblock Optimal lattices for sampling.
\newblock {\em IEEE Transactions on Information Theory}, 51(2):634--647, 2005.

\bibitem[KdK23]{KdK23}
F.~Kirschner and E.~de~Klerk.
\newblock Construction of multivariate polynomial approximation kernels via
  semidefinite programming.
\newblock {\em SIAM Journal on Optimization}, 33(2):513--537, 2023.

\bibitem[Las01]{lasserre01}
J.-B. Lasserre.
\newblock Global {Optimization} with {Polynomials} and the {Problem} of
  {Moments}.
\newblock {\em SIAM Journal of Optimization}, 11(3):796--817, 2001.

\bibitem[LFF{\etalchar{+}}20]{lakshmi20}
M.~Lakshmi, G.~Fantuzzi, J.~{Fern\'andez--Caballero}, Y.~Hwang, and
  S.~Chernyshenko.
\newblock Finding extremal periodic orbits with polynomial optimization, with
  application to a nine--mode model of shear flow.
\newblock {\em SIAM Journal on Applied Dynamical Systems}, 19(2):763--787,
  2020.

\bibitem[LX10]{Xu09}
H.~Li and Y.~Xu.
\newblock Discrete {F}ourier analysis on fundamental domain and simplex of
  {$A_d$} lattice in {$d$} variables.
\newblock {\em The Journal of Fourier Analysis and Applications},
  16(3):383--433, 2010.

\bibitem[Met22]{TobiasThesis}
T.~Metzlaff.
\newblock \textit{{Groupes Cristallographiques et Polyn\^{o}mes de Chebyshev en
  Optimisation Globale}}.
\newblock Th\`{e}se de doctorat en Math\'{e}matiques d'Universit\'{e} C\^{o}te
  d'Azur
  \href{https://www.theses.fr/2022COAZ4094}{https://www.theses.fr/2022COAZ4094},
  2022.

\bibitem[Met23]{issac23}
T.~Metzlaff.
\newblock {Symmetry Adapted Bases for Trigonometric Optimization}.
\newblock {\em ACM Communications in Computer Algebra}, 57(3):137--140, 2023.

\bibitem[MKNR13]{MuntheKaas2012}
H.~Munthe-Kaas, M.~Nome, and B.~Ryland.
\newblock Through the kaleidoscope: symmetries, groups and
  {C}hebyshev-approximations from a computational point of view.
\newblock In {\em Foundations of computational mathematics, {B}udapest 2011},
  London Mathematical Society Lecture Note Series, pages 188--229. Cambridge
  Univ. Press, Cambridge, 2013.

\bibitem[MP11]{Moody2011}
R.~Moody and J.~Patera.
\newblock Cubature formulae for orthogonal polynomials in terms of elements of
  finite order of compact simple {L}ie groups.
\newblock {\em Advances in Applied Mathematics}, 47:509--535, 2011.

\bibitem[Par03]{parrilo03}
P.~Parrilo.
\newblock Semidefinite programming relaxations for semialgebraic problems.
\newblock {\em Mathematical Programming}, 96(2):293--320, 2003.

\bibitem[PS20]{Petrache20}
M.~Petrache and S.~Serfaty.
\newblock Crystallization for coulomb and riesz interactions as a consequence
  of the cohn-kumar conjecture.
\newblock {\em Proceedings of the AMS}, 148(7):3047--3057, 2020.

\bibitem[RBH21]{HubertBazan21}
E.~Rodriguez~Bazan and E.~Hubert.
\newblock Multivariate interpolation: Preserving and exploiting symmetry.
\newblock {\em Journal of Symbolic Computation}, 107:1--22, 2021.

\bibitem[RBH22a]{HubertBazan22b}
E.~Rodriguez~Bazan and E.~Hubert.
\newblock Algorithms for fundamental invariants and equivariants of finite
  groups.
\newblock {\em Mathematics of Computation}, 91(337), 2022.

\bibitem[RBH22b]{HubertBazan22}
E.~Rodriguez~Bazan and E.~Hubert.
\newblock Symmetry in multivariate ideal interpolation.
\newblock {\em Journal of Symbolic Computation}, 115:174--200, 2022.

\bibitem[Ser77]{serre77}
J.-P. Serre.
\newblock {\em Linear {R}epresentations of {F}inite {G}roups}.
\newblock {Graduate Texts in Mathematics}. Springer, New York, 1977.

\bibitem[Ser01]{serre13}
J.-P. Serre.
\newblock {\em Complex {Semisimple} {Lie} {Algebras}}.
\newblock {Monographs} in {Mathematics}. Springer, Berlin, Heidelberg, 2001.

\bibitem[ST54]{shephardtodd54}
G.~Shephard and J.~Todd.
\newblock Finite unitary reflection groups.
\newblock {\em Canadian Journal of Mathematics}, 6:274--304, 1954.

\bibitem[Sta79]{Stanley1979}
R.~Stanley.
\newblock Invariants of finite groups and their applications to combinatorics.
\newblock {\em Bull. Amer. Math. Soc.}, 1(3):475–--511, 1979.

\bibitem[Val09]{vallentin09}
F.~Vallentin.
\newblock {Symmetry in semidefinite programs}.
\newblock {\em Experimental Mathematics}, 430(1):360--369, 2009.

\bibitem[Via17]{Viazovska17}
M.~Viazovska.
\newblock The sphere packing problem in dimension 8.
\newblock {\em Annals of Mathematics}, 185(3):991--1015, 2017.

\bibitem[WM22]{victor22}
J.~Wang and V~Magron.
\newblock Exploiting sparsity in complex polynomial optimization.
\newblock {\em Journal of Optimization Theory and Applications}, 192:335--359,
  2022.

\bibitem[WSKM23]{corbi23}
J.~Wang, C.~Schlosser, M.~Korda, and V.~Magron.
\newblock Exploiting term sparsity in moment--sos hierarchy for dynamical
  systems.
\newblock {\em IEEE Transactions on Automatic Control}, 68(12):8232--8237,
  2023.

\end{thebibliography}

\clearpage

\appendix

\section{Lattices and Weyl groups associated to crystallographic root systems}
\label{appendix_root_systems}

We recall the definition for the lattices and groups, which appeared in the examples throughout the article and which are of interest for the applications mentioned in the introduction. 
Such lattices arise from root systems, which appear, for example, in the representation theory of semi-simple complex Lie algebras, see \cite{bourbaki456,serre13}. 

Let $V$ be a finite-dimensional real vector space with inner product $\sprod{\cdot,\cdot}$. 
A subset $\Roots\subseteq V$ is called a (crystallographic, reduced) \textbf{root system} in $V$, if the following conditions\footnote{This is the definition given in \cite[Ch. VI, \S 1, D\'{e}f. 1]{bourbaki456} plus the ``reduced'' property R4.} hold.
\begin{enumerate}
	\item[R1] $\Roots$ is finite, spans $V$ as a vector space and does not contain $0$.
	\item[R2] If $\rho, \tilde{\rho} \in \Roots$, then $\langle\tilde{\rho},\rho^\vee\rangle \in \Z$, where $\rho^\vee:=\frac{2\,\rho}{\langle\rho,\rho\rangle}$.
	\item[R3] If $\rho, \tilde{\rho} \in \Roots$, then $s_\rho(\tilde{\rho}) \in \Roots$, where $s_\rho$ is the  reflection defined by $ s_\rho(u) = u - \langle u,\rho^\vee\rangle \rho$ for $u \in  V$.
	\item[R4] For $\rho \in \Roots$ and $c \in \R$, we have $c\rho \in \Roots$ if and only if $c = \pm 1$.$\phantom{\frac{1}{2}}$
\end{enumerate}
The elements of $\Roots$ are called \textbf{roots} and the $\rho^\vee$ are called \textbf{coroots}. 
The lattice $\Corootlattice$ spanned by all coroots $\roots^\vee$ is full-dimensional in $V$ and called the \textbf{coroot lattice}. 

The \textbf{Weyl group} $\weyl$ of $\Roots$ is the group generated by the reflections $s_\roots$ for $\roots \in \Roots$. 
This is a finite subgroup of $\mathrm{GL}(V)$ and orthogonal with respect to the inner product $\sprod{\cdot , \cdot }$. 
The coroot lattice is a $\weyl$-lattice and the group product of $\weyl$ by $\Corootlattice$ is semi-direct. 

Every root system contains a \textbf{base}, 
that is, a subset $\Base=\{\rho_1,\ldots,\rho_n\}$ of $\Roots$ satisfying the following conditions \cite[Ch. VI, \S 1, Thm. 3]{bourbaki456}.
\begin{enumerate}
	\item[B1] $\Base$ is a vector space basis of $V$.
	\item[B2] Every root $\rho \in \Roots$ can be written as $\rho = \alpha_1 \, \rho_1 + \ldots + \alpha_n \, \rho_n$ or $\rho = - \alpha_1 \, \rho_1 - \ldots - \alpha_n \, \rho_n$ for some $\alpha \in \N^n$.
\end{enumerate}

A \textbf{weight} of $\Roots$ is an element $\weight\in V$, such that, for all $\roots\in\Roots$, we have $\sprod{\weight,\roots^\vee}  \in \Z$. 
By the ``crystallographic'' property R2, every root is a weight. 
For a base $\Base=\{\rho_1,\ldots,\rho_n\}$ of the root system, the \textbf{fundamental weights} are the elements $\{ \fweight{1}, \ldots , \fweight{n}\}$, where, for $1\leq i,j \leq n$, we have $\sprod{ \fweight{i}, \roots_j^\vee} = \delta_{i,j}$. 
The set $\Weights$ of all weights $\weight$ is a full-dimensional $\weyl$-lattice in $V$ and called the \textbf{weight lattice} of $\Roots$. 
By definition, it is the dual lattice of the coroot lattice, that is, $\Weights^* = \Corootlattice$. 

Assume that $V=V^{(1)}\oplus\ldots\oplus V^{(k)}$ is the direct sum of proper orthogonal subspaces and that, for each $1\leq i\leq k$, there is a root system $\Roots^{(i)}$ in $V^{(i)}$. 
Then $\Roots:=\Roots^{(1)}\cup\ldots\cup\Roots^{(k)}$ is a root system in $V$ called the \textbf{direct sum} of the $\Roots^{(i)}$. 
If a root system is not the direct sum of at least two root systems, then it is called \textbf{irreducible}, see \cite[Ch. VI, \S 1.2]{bourbaki456}. 

Every root system can be uniquely decomposed into irreducible components \cite[Ch. VI, \S 1, Prop. 6]{bourbaki456} 
and there are only finitely many cases \cite[Ch. VI, \S 4, Thm. 3]{bourbaki456} 
denoted by $\RootA$, $\RootB$, $\RootC$ $(n\geq 2)$, $\RootD$ $(n\geq 4)$, $\RootE[6,7,8]$, $\RootF[4]$ and $\RootG[2]$. 
These are explicitly given in \cite[Pl. I - IX]{bourbaki456}. 

The Weyl group $\weyl$ is the product of the Weyl groups corresponding to the irreducible components, see the discussion before \cite[Ch. VI, \S 1, Prop. 5]{bourbaki456}. 
If $\Roots$ is irreducible and $\Base$ is a fixed base, 
then there exists a unique positive root $\highestroot \in \Roots^+$, 
so that, for all $\roots\in\Roots$, there is some $\alpha\in\N^n$ with $\highestroot - \roots = \alpha_1\,\roots_1 + \ldots + \alpha_n\,\roots_n$ \cite[Ch. VI, \S 1, Prop. 25]{bourbaki456}. 
We call $\highestroot$ the \textbf{highest root}. 

\begin{example}\label{example_2RootSys}
	For $n=1$, a root system in $\R$ must be of the form $\Roots=\{ \pm\roots \}$ for some $\roots\in\R_{>0}$, which is the only base element and the highest root. 
	It admits the reflection at the origin $s_\roots = -1$ and so the Weyl group is $\weyl = \{ \pm 1 \}$. 
	The fundamental Weyl chamber is $\PC = \R_{>0}$ and the coroot is $\roots^\vee = 2\,\roots / \sprod{\roots , \roots} = 2 / \roots$. 
	For $\fweight{}\in\R$ to be the fundamental weight, we require $1 = \sprod{\fweight{} , \roots^\vee} = 2\,\fweight{} / \roots$, that is, $\fweight{} = \roots / 2$. 
	When we choose $\roots=2$, then $\Roots$ admits the self-dual lattice $\Weights = \Corootlattice = \Z$. 
	
	For $n=2$, the irreducible root systems are depicted in \emph{\Cref{example_rootsystemA2,example_rootsystemB2,example_rootsystemC2,example_rootsystemG2}}. 
	The roots are depicted in green, the base in red and the fundamental weights in blue. 
	The gray shaded region is a Vorono\"{i} cell of the coroot lattice $\Corootlattice:$ 
	we have two squares $(\RootC[2]$ and $\RootB[2])$ and two hexagons $(\RootA[2]$ and $\RootG[2])$. 
	The blue shaded triangle is a fundamental domain of the semi-direct product $\weyl\ltimes\Corootlattice$. 
	
	\begin{minipage}{0.45\textwidth}
		\begin{figure}[H]
			\begin{minipage}{0.4\textwidth}
				\begin{flushright}
					\begin{overpic}[width=\textwidth,grid=false,tics=10]{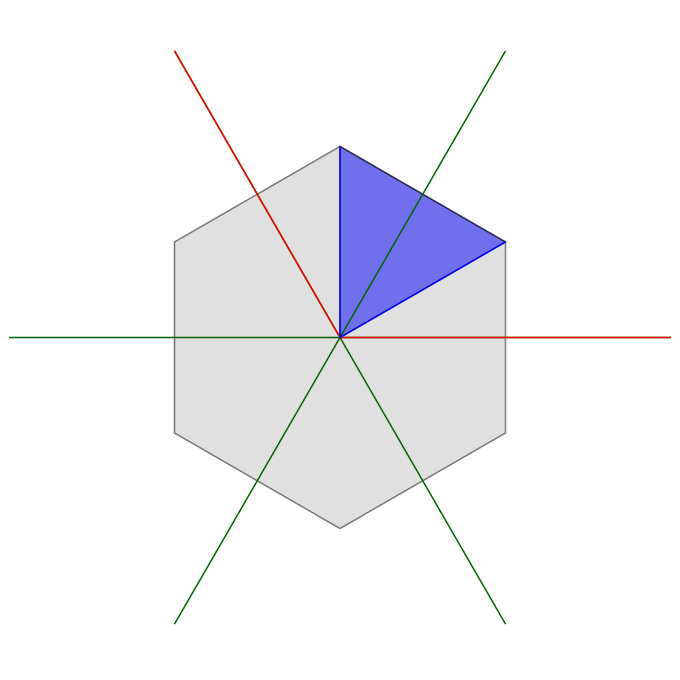}
						\put (20,95) {\large $\displaystyle \rho_2$}
						\put (95,55) {\large $\displaystyle \rho_1$}
						\put (75,66) {\large $\displaystyle \fweight{1}$}
						\put (50,81) {\large $\displaystyle \fweight{2}$}
					\end{overpic}
				\end{flushright}
			\end{minipage} \hfill
			\begin{minipage}{0.5\textwidth}
				$\weyl(\RootA[2]) \cong \mathfrak{S}_3$\\
				$\fweight{1}=[ 2,-1,-1]^t /3$\\
				$\fweight{2}=[ 1, 1,-2]^t /3$\\
				$\rho_{1}=[1,-1,0]^t =\rho_{1}^\vee$\\
				$\rho_{2}=[0,1,-1]^t =\rho_{2}^\vee$\\
				$\highestroot = \rho_{1}^\vee + \rho_{2}^\vee$
			\end{minipage}
			\centering
			\caption{The root system $\RootA[2]$ in $\R^3/\langle [1,1,1]^t \rangle$.}\label{figA2}
			\label{example_rootsystemA2}
		\end{figure}
	\end{minipage}\hfill
	\begin{minipage}{0.45\textwidth}
		\begin{figure}[H]
			\begin{minipage}{0.4\textwidth}
				\begin{flushright}
					\begin{overpic}[width=\textwidth,,tics=10]{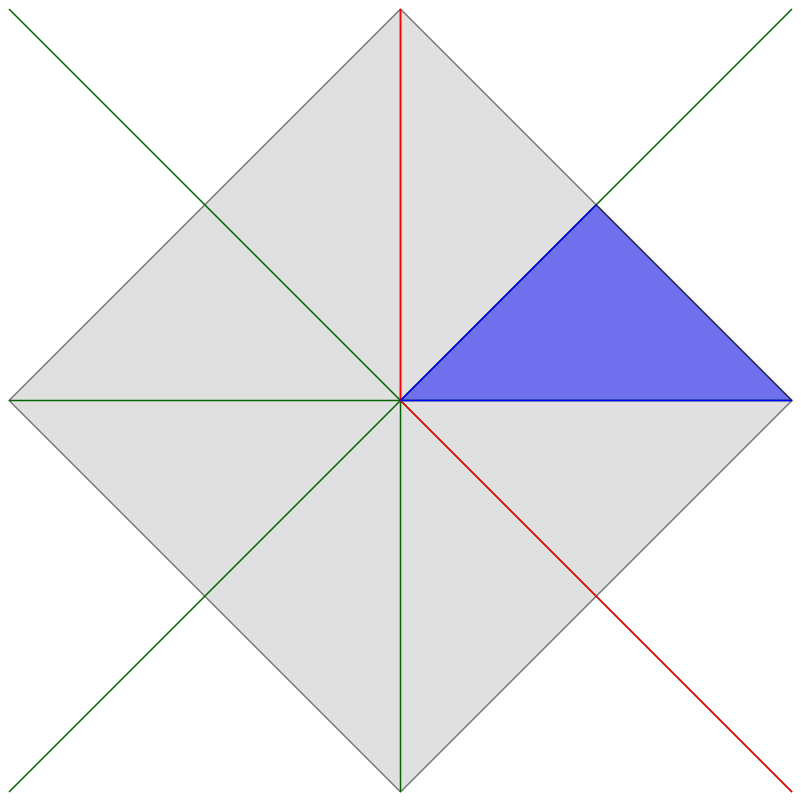}
						\put (53,100) {\large $\displaystyle \rho_2$}
						\put (99,  8) {\large $\displaystyle \rho_1$}
						\put (69, 83) {\large $\displaystyle \fweight{2}$}
						\put (95, 55) {\large $\displaystyle \fweight{1}$}
					\end{overpic}
				\end{flushright}
			\end{minipage} \hfill
			\begin{minipage}{0.5\textwidth}
				$\weyl(\RootB[2]) \cong \mathfrak{S}_2\ltimes\{\pm 1\}^2$\\
				$\fweight{1}=[ 1,0]^t$\\
				$\fweight{2}=[ 1,1]^t /2$\\
				$\rho_{1}=[1,-1]^t =\rho_{1}^\vee$\\
				$\rho_{2}=[0,1]^t =\rho_{2}^\vee/2$\\
				$\highestroot=\rho_{1}^\vee+\rho_{2}^\vee$
			\end{minipage}
			\centering
			\caption{The root system $\RootB[2]$ in $\R^2$.}\label{example_rootsystemB2}\label{figB2}
		\end{figure}
	\end{minipage}
	
	~\vspace{.2cm}
	
	\begin{minipage}{0.45\textwidth}
		\begin{figure}[H]
			\begin{minipage}{0.4\textwidth}
				\begin{flushright}
					\begin{overpic}[width=\textwidth,grid=false,tics=10]{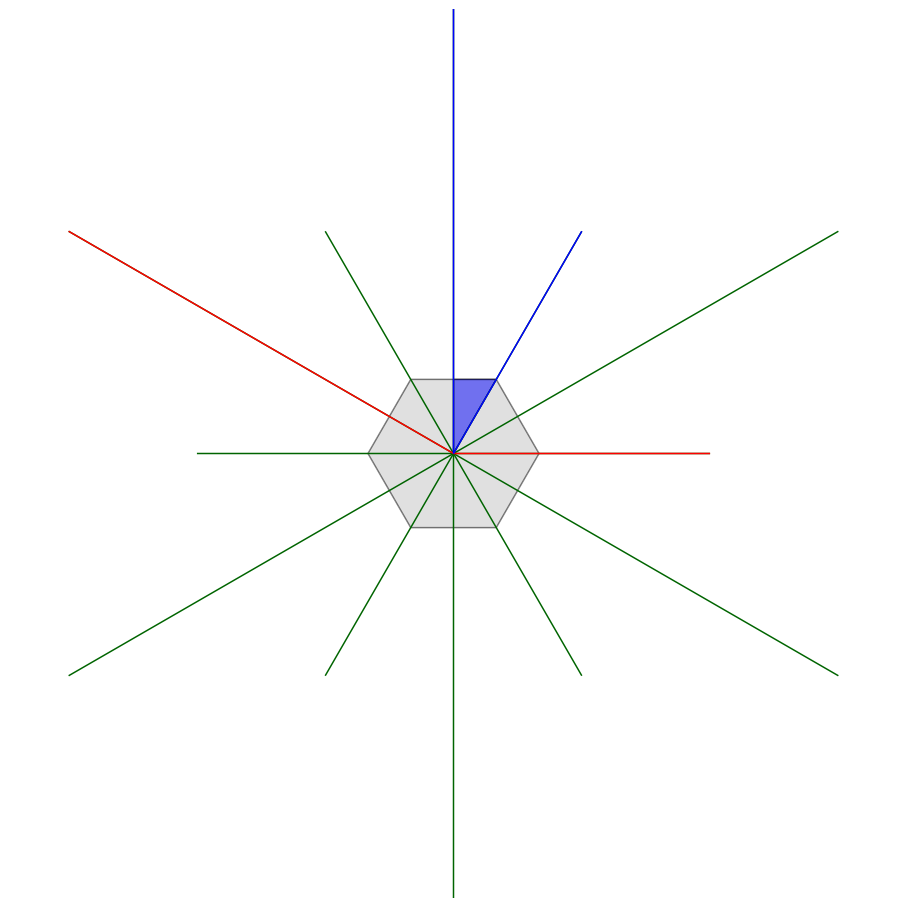}
						\put ( 15 , 75 ) {\large $\displaystyle \rho_2$}
						\put ( 80 , 55 ) {\large $\displaystyle \rho_1$}
						\put ( 52 , 95 ) {\large $\displaystyle \fweight{2}$}
						\put ( 65 , 75 ) {\large $\displaystyle \fweight{1}$}
					\end{overpic}
				\end{flushright}
			\end{minipage} \hfill
			\begin{minipage}{0.5\textwidth}
				$\weyl(\RootG[2])\cong\mathfrak{S}_3\ltimes \{\pm 1\}$\\
				$\fweight{1}=[ 0,-1, 1]^t$\\
				$\fweight{2}=[-1,-1, 2]^t$\\
				$\rho_{1}   =[ 1,-1, 0]^t = \rho_{1}^\vee$\\
				$\rho_{2}   =[-2, 1, 1]^t = 3\,\rho_{1}^\vee $\\
				$\highestroot = 3\,\rho_{1}^\vee + 6\,\rho_{2}^\vee$
			\end{minipage}
			\centering
			\caption{The root system $\RootG[2]$ in $\R^3/\langle [1,1,1]^t \rangle$.}\label{example_rootsystemG2}
		\end{figure}
	\end{minipage}\hfill
	\begin{minipage}{0.45\textwidth}
		\begin{figure}[H]
			\begin{minipage}{0.4\textwidth}
				\begin{flushright}
					\begin{overpic}[width=\textwidth,,tics=10]{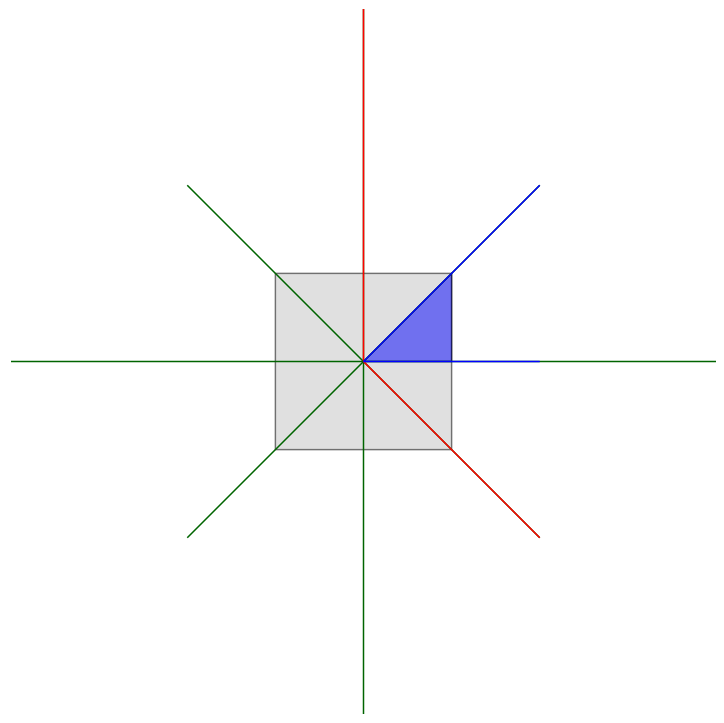}
						\put (53, 95) {\large $\displaystyle \rho_2$}
						\put (77, 20) {\large $\displaystyle \rho_1$}
						\put (69, 80) {\large $\displaystyle \fweight{2}$}
						\put (77, 54) {\large $\displaystyle \fweight{1}$}
					\end{overpic}
				\end{flushright}
			\end{minipage} \hfill
			\begin{minipage}{0.5\textwidth}
				$\weyl(\RootC[2]) \cong \mathfrak{S}_2\ltimes\{\pm 1\}^2$\\
				$\fweight{1}=[ 1,0]^t $\\
				$\fweight{2}=[ 1,1]^t $\\
				$\rho_{1}=[1,-1]^t =\rho_{1}^\vee$\\
				$\rho_{2}=[0,2]^t = 2\,\rho_{2}^\vee$\\
				$\highestroot = 2\,\rho_{1}^\vee + 2\,\rho_{2}^\vee$
			\end{minipage}
			\centering
			\caption{The root system $\RootC[2]$ in $\R^2$.}\label{example_rootsystemC2}\label{figC2}
		\end{figure}
	\end{minipage}
\end{example}

\end{document}